\newtheorem{thm}{Theorem}[section]
\newtheorem{corol}[thm]{Corollary}
\newtheorem{lem}[thm]{Lemma}
\newtheorem{prop}[thm]{Proposition}
\newtheorem{dfn}[thm]{Definition}
\newtheorem{rmk}[thm]{Remark}
\DeclareMathOperator{\dist}{dist}
\DeclareMathOperator{\Ric}{Ric}
\DeclareMathOperator{\interior}{int}
\numberwithin{equation}{section}
\title{On the concentration for a singularly perturbed problem with nonlinear Neumann boundary condition}
\author{Eduardo Hitomi}
\address{IMECC - University of Campinas, 13083-859, Campinas - SP, Brazil}
\email{ehitomi@ime.unicamp.br}
\subjclass[2010]{Primary: 53C21; 53A07; 53A10}
\keywords{Capillary surfaces, Allen-Cahn equation, Lyapunov-Schmidt reduction.}
\date{2018}
\thanks{This study was financed in part by the Coordena\c c\~ao de Aperfei\c coamento de Pessoal de N\'ivel Superior - Brazil (CAPES) - Finance Code $001$}
\begin{document}
\calclayout

\begin{abstract}
In this work, we prove the existence of a family of solutions of the Allen-Cahn equation with nonlinear Neumann boundary condition under some constraints, whose nodal sets concentrate asymptotically to a given volume nondegenerate capillary hypersurface in a compact Riemannian manifold. Our construction is inspired
by the work of  Pacard in \cite{pacard} and of Pacard and Ritor\'e in \cite{pacard-ritore}.
\end{abstract}

\maketitle

\pagestyle{plain}

\section{Introduction}
One of the main approaches to understand the nature of a Riemannian submanifold and its relations with the ambient space is to look at local foliation structures. It provides explicitly  neighborhoods of the submanifold so that one can perform an asymptotic study from the geometric properties of the leaves. Classical examples are the constructions of Ye in \cite{ye} around nondegenerate critical points of the scalar curvature by topological spheres, of Huisken and Yau in \cite{huisken-yau} around asymptotically flat $3$-manifolds near infinity by stable closed constant mean curvature (CMC) surfaces and of Mahmoudi, Mazzeo and Pacard in \cite{mahmoudi-mazzeo-pacard} around nondegenerate minimal submanifolds by CMC tubes. Recently, Li in \cite {li} followed the construction of Ambrozio in \cite{ambrozio} to provide a foliation by capillary surfaces around corners of polyhedra in a $3$-manifold and answer affirmatively the dihedral rigidity conjecture of Gromov.

One perspective to construct foliations is to consider the classical relation between the gradient theory of phase transitions and the class of submanifolds arising from geometric variational problems. In fact, given the works of Modica, Sternberg, Anzellotti et al. showing that, in a bounded domain $\Omega \in \mathbb{R}^{n},$ a sequence of minimizers of the Allen-Cahn energy
$$
E_{\varepsilon}(u) \coloneqq  \int_{\Omega} \dfrac{\varepsilon}{2} |\nabla u|^{2} + \dfrac{W(u)}{\varepsilon},
$$
where $W$ is a double-well potential (for instance, the typical  $W(s) = \frac{(1  - s^{2})^{2}}{4}$) under the mass constraint
$$
\int_{\Omega} u = m,
$$
for some fixed $m \in (-|\Omega|,|\Omega|),$ converges to $\pm 1$ in compact subsets so that the nodal sets concentrates around a hypersurface which minimizes area, Pacard and Ritor\'e in \cite{pacard-ritore} considered the converse question to show the existence of a foliation around any nondegenerate CMC hypersurface $\Sigma$ in a compact Riemannian manifold $M$ with or without boundary by nodal sets of critical points of the associated Allen-Cahn energies. In the case of nonempty boundary $\partial M,$ they assume that $\Sigma$ meets $\partial M$ orthogonally, that is, it is a free boundary one. Considering the great deal of work devoted to understand CMC hypersurfaces intersecting the boundary more generally with a fixed angle $\theta,$ that is capillary hypersurfaces, and the work of Modica \cite{modica}, we are interested in this paper the Pacard-Ritor\'e's analogous to this case.  For a broad introduction to capillar surfaces, see the book \cite{finn} of Finn.  Physically in $\mathbb{R}^{3},$ these surfaces correspond to the description of an incompressible liquid in a container in the absense of gravity and as an interface between two incompressible fluids. 
We also mention that recently, Wang and Xia in \cite{wang-xia} closed the classification problem in a ball, showing that any immersed stable capillary hypersurfaces in a ball in space forms are totally umbilical, generalizing the work \cite{nunes} of Nunes dealing with the minimal free boundary case.

Let us state the setting of this work. Let $(M,g)$ to be a $n$-dimensional compact Riemannian manifold with smooth boundary $\partial M \neq \emptyset$ and assume without loss of generality that $M$ is a subdomain of a larger Riemannian manifold $(\Tilde{M},\Tilde{g})$ with $\Tilde{g}|_{M} = g$ so that $\partial M$ is a smooth hypersurface of $\Tilde{M}.$  Let $W \colon \mathbb{R} \to \mathbb{R}$ be a smooth function which is positive away from $u = \pm 1.$ We assume that the points $u = \pm 1$ are the only minima, nondegenerate critical points of $W,$ and 
\begin{equation} 
\lim_{u \to 1^{-}} \dfrac{W'(u)}{\sqrt{W(u)}}\;\;\text{and}\;\; \lim_{u \to - 1^{+}} \dfrac{W'(u)}{\sqrt{W(u)}} \;\;\text{both exist}.
\end{equation}

For any $\varepsilon > 0$ and any function $u \colon M \to \mathbb{R},$ such that $u \in H^1(M),$ define the energy
\begin{equation} \label{eq.2.3}
E_{\varepsilon}(u) \coloneqq \int_{M} \dfrac{\varepsilon}{2} |\nabla u|_{g}^2 + \dfrac{W(u)}{\varepsilon}\;dv_{g} + \int_{\partial M} \sigma(u)\;d\mathcal{H}^{n-1},
\end{equation}
where $\sigma \colon \mathbb{R} \to \mathbb{R}$ is a function satisfying 
\begin{equation} \label{eq.sigma}
\sigma'(t) = \cos{\theta} \sqrt{2 W(t)},
\end{equation} 
with $\theta \in (0,\pi/2]$ fixed, without loss of generality, $\nabla$ denotes the gradient, $dv_{g}$ is the volume form on $M$ associated to the Riemannian metric $g$ and $\mathcal{H}^{n-1}$ is the $(n-1)$-dimensional Hausdorff measure. We agree that $E_{\varepsilon}(u) = \infty$ when $W(u) \notin L^1(M).$ We also define the mass constraint 
\begin{equation} 
V(u) = \int_{M} u\;dv_{g}.
\end{equation}

Given $c_{0} \in (-1,1),$  we can consider critical points of the energy $u \mapsto E_{\varepsilon}(u)$ under the constraint $V(u) = c_{0}|M|,$ where $|M|$ denotes the volume of $M.$ Such a critical point $u$ is solution of
\begin{equation} \label{eq.2.7}
-\varepsilon^2 \Delta_{g} u +  W'(u) = \varepsilon \lambda,
\end{equation}
in $M,$ where $\varepsilon \lambda \in \mathbb{R}$ corresponds to the Lagrange multiplier associated to the constraint $V(u) = c_{0}|M|.$ Moreover, $u$ satisfies the nonlinear Neumann boundary condition
\begin{equation} 
-\varepsilon g(\nabla u, \nu_{\partial M})= \sigma'(u),
\end{equation}
on $\partial M,$ where $\nu_{\partial M}$ denotes the unit outer normal vector field of $\partial M.$ The energy $E_{\varepsilon}$ corresponds to the typical total energy of a fluid within the van der Waals-Cahn-Hilliard theory of phase transitions, modelling separation phenomena for capillary surfaces. The Lagrange multiplier $\varepsilon \lambda,$ which appears in \eqref{eq.2.7}, is known in the physics literature as the chemical potential of the density configuration $u.$ Considering that in a bounded domain $\Omega$ of $\mathbb{R}^n,$ Modica in \cite{modica} showed the existence of energy minimizers $\{u_{\varepsilon}\}_{\varepsilon \in (0,1)}$ and convergence to a function $u$ in $L^{1}$ as $\varepsilon \to 0,$ with $u = \pm 1$ a.e. in $\Omega,$ and under a suitable assumption on $\sigma$ that the contact angle $\theta
$ formed by the boundary $\partial \Omega$ and the reduced boundary of $\{u = 1\}$ in $\Omega$ is equal to
\begin{equation} 
\theta = \arccos \left(\dfrac{\sigma(1) - \sigma(-1)}{c_{0}} \right),
\end{equation}
where
\begin{equation} 
c_{0} = \int_{-1}^{1} \sqrt{2W(s)}\;ds.
\end{equation}
We would like to address is the following question.

\textit{Assume that $\Sigma \subset M$ is a constant mean curvature hypersurface meeting $\partial M$ with a fixed angle $\theta \in (0,\pi/2].$ Does $\Sigma$ appear as the limit, as the parameter $\varepsilon$ tends to $0,$ of the nodal sets of a sequence of critical points of $E_{\varepsilon}$ subject to the constraint $V(u) = c_{0}|M|$?} 


The characterization of the contact angle condition is through the energy minimality of the $\Gamma$-limit functional and it is essential that $u_{\varepsilon}$'s are global energy minimizers for the $\Gamma$-convergence argument. In \cite{kagaya-tonegawa, kagaya-tonegawa1}, Kagaya and Tonegawa considered the more general case of critical points, analyzing the the contact angle condition due the presence of the boundary term in \eqref{eq.2.3} as $\varepsilon \to 0^{+}.$ However, the phase transitions approach via min-max methods for the existence theory of general capillary hypersurfaces in a compact manifold with boundary has not been developed yet, as done for instance in \cite{guaraco, gaspar-guaraco} in the setting of minimal hypersurfaces on closed manifolds.
The difficulties here arise in the boundary behavior which can be \textit{a priori} completely different from the interior as pointed in \cite{mizuno-tonegawa} by Mizuno and Tonegawa. Not only the location of the limit interface does not need to be the continous with respect to the boundary, that is the limiting solution on the boundary (see Theorem $3.2$ of \cite{kagaya-tonegawa1}) is not necessarily the trace of the limiting solution in the interior, but also the nodal sets can concentrate along the boundary, a wet dropping effect. 

Also, we note that concentration in Pacard-Ritor\'e's construction, and so in this work, happens with multiplicity $1.$ We can expect also that the concentration, not necessarily a foliation (the distribution occuring  $O(\varepsilon \log(\varepsilon))$ distant from each other), with arbitrary multiplicity holds as an analogue to the work \cite{delpino-kowalczyk-wei-yang} of Del Pino, Kowalczyk, Wei and Yang under the hypothesis of positive Ricci curvature case, at least for the free boundary case, in view of \cite{malchiodi-ni-wei}.

\subsection{Concentration on capillary  hypersurfaces}
Let $\Sigma^{n-1}$ a smooth hypersurface of $M$ with boundary $\partial \Sigma$ dividing $M$ into two components $M^{\pm}(\Sigma)$ such that $\partial \Sigma \subset \partial M,$ and $\Sigma$ meets $M$ with constant angle $\theta.$ Along this work, we denote by
\begin{itemize}
\item $\nu_{\partial_{M}}$ the unit normal vector field along $\partial M$ pointing outside $\partial M,$
\item $\tau_{\partial M}$ the unit normal vector field along $\partial \Sigma$ tangent to $\partial M,$ pointing into $M^{+}(\Sigma),$
\item $II_{\partial M}$ the second fundamental form on $\partial M$ with the sign convention $II_{\partial M}(X,Y) = g(\nabla_{X} \nu_{\partial M}, Y),$ for all $X,Y \in T\partial M,$
\item $\nu_{\Sigma}$ the local unit normal vector field of $\Sigma,$
\item $\tau_{\partial \Sigma}$ the unit conormal of $\partial \Sigma$ that points outside $\Sigma,$
\item $II_{\Sigma}$ the second fundamental form on $\Sigma$ with the sign convention $II_{\Sigma}(X,Y) = -g(\nabla_{X} \nu_{\Sigma}, Y),$ for all $X,Y \in T\Sigma,$
\item $H$ the mean curvature of $\Sigma.$
\end{itemize}

Since $\Sigma \subset M$ separates $M$ into two regions $M^{\pm}(\Sigma)$, it is the nodal set of some $u$ defined in $M$ and if $0$ is a regular value of $u,$ then $M \setminus \Sigma$ is the union of
$$
M^+(\Sigma) = u^{-1}((0,\infty))\;\;\;\text{and}\;\;\;M^+(\Sigma) = u^{-1}((-\infty,0)).
$$
Then by $\Sigma$ meeting $\partial M$ with constant angle $\theta,$ we mean that for every $p \in \partial \Sigma,$ we have 
\begin{equation} \label{eq.admiss}
g\left(\nabla u (p), \nu_{\partial M}(p)\right)= |\nabla u (p)| \cos(\theta),
\end{equation}

\begin{dfn} 
A smooth embedded hypersurface $\Sigma \subset M$ is $\theta$-admissible if $\Sigma$ is the nodal set of a smooth function $u$ for which $0$ is a regular value of $u$ and which satisfies the boundary condition \eqref{eq.admiss} for all $p \in \partial \Sigma.$
\end{dfn}

\subsubsection{Jacobi fields on a capillary hypersurface}
Let $\Omega \subset \partial M$ be one of the component $\partial M^{+} \cap \partial M.$ Let $\Phi_{t}$ be an admissible variation of $\Sigma$ with variation vector field $Y(p) = \frac{\partial \Phi_{t}}{\partial t}(p) \vert_{t = 0}$ for every $p \in \Sigma,$ that is $\Phi_{t}(\interior \Sigma) \subset \interior M$ and $\Phi_{t}(\partial \Sigma) \subset \partial M.$ If $A(t)$ denotes the volume of $\Sigma_{t} \coloneqq \Phi_{t}(\Sigma),$  $T(t)$ denotes the volume of $\Omega_{t} \coloneqq \Phi_{t}(\Omega),$ and  $V(t)$ denotes the volume enclosed by $\Sigma$ and $\Sigma_{t},$ consider the the total energy
$$
E(t) \coloneqq A(t) - \cos \theta T(t). 
$$
A variation is volume-preserving if $V(t) = V(0)$ for every $t.$ $\Sigma$ is capillary if and only if it is stationary for $E$ for any volume-preserving admissible variation, that is $E'(0) = V'(0) = 0,$ where
$$
E'(0) = -\int_{\Sigma} n H_{\Sigma} \left\langle Y,\nu_{\Sigma} \right\rangle\;da_{g} + \int_{\partial \Sigma} \left\langle Y, \tau_{\Sigma} - \cos \theta  \tau_{\partial M} \right\rangle\;ds_{g},
$$
and
$$ 
V'(0) = \int_{\Sigma} \left\langle Y, \nu_{\Sigma} \right\rangle\;da_{g}.
$$
Here $da_{g}$ and $ds_{g}$ are the volume forms on $\Sigma$ and $\partial \Sigma$ which are induced by the metric $g.$ This of course implies that $\Sigma$ is capillary if it has constant mean curvature and intersects $\partial M$ with constant angle $\theta,$ in the sense that the angle between $\tau_{\Sigma}$ and $\tau_{\partial M}$, or equivalently between $\nu_{\Sigma}$ and $\nu_{\partial M},$ is $\theta.$

Recall that the linearized operator of the mean curvature about $\Sigma$ is given by the second variation of $E,$ which for any volume-preserving admissible variation is
$$
E''(0) = -\int_{\Sigma} \omega J_{\Sigma}\omega\;da_{g} + \int_{\partial \Sigma} \omega \; B_{\Sigma}\omega\;ds_{g},
$$
where $\omega = \left\langle Y,\nu_{\Sigma} \right\rangle,$  $J_{\Sigma}$ is the Jacobi operator about $\Sigma$ given by
\begin{equation} \label{eq.3.2}
J_{\Sigma}\coloneqq - \Delta_{\Sigma} + |II_{\Sigma}|^2 + \Ric_{g}(\nu_{\Sigma},\nu_{\Sigma}),
\end{equation}
where $\Delta_{\Sigma}$ is the Laplace-Beltrami operator on $\Sigma,$ $|II_{\Sigma}|^2$ is the norm of the second fundamental form of $\Sigma$ and $\Ric_{g}$ is the Ricci curvature of $M;$ and
\begin{equation} \label{eq.3.4}
B_{\Sigma} \omega \coloneqq \partial_{\tau_{\Sigma}}\omega - \dfrac{1}{\sin(\theta)} \left( II_{\partial M} (\tau_{\partial M},\tau_{\partial M}) + \cos(\theta) II_{\Sigma}(\tau_{\Sigma},\tau_{\Sigma}) \right) \omega.
\end{equation}

Since for any smooth $\omega$ with $\int_{\Sigma} \omega\;da_{g} = 0,$ there exists  an admissible, volume-preserving variation vector field $\omega \nu_{\Sigma}$ as a normal part, then Jacobi fields are solutions of 
\begin{equation} \label{eq.jacozin}
\left\lbrace
\begin{array}{l}
J_{\Sigma} \omega = 0\;\;\text{in}\;\Sigma,\\\\
B_{\Sigma}\omega = 0\;\;\text{on}\;\partial \Sigma,\\\\
\int_{\Sigma} \omega \;da_{g} = 0.
\end{array}
\right.
\end{equation}

See the Appendix of \cite{ros-souam} for a derivation of the second variation of the energy $E.$ 

\subsubsection{The nondegeneracy condition}
\begin{dfn} \label{dfn.3.3}
A $\theta$-admissible constant mean curvature hypersurface $\Sigma$ is  volume-nondegenerate if there are no nontrivial solutions $(\omega,c) \in \mathcal{C}^{2,\alpha}(\Sigma) \times \mathbb{R}$ of \eqref{eq.jacozin}.
\end{dfn}

Volume-nondegeneracy is equivalent to the fact that the operator
\begin{equation} \label{eq.3.6}
\begin{split}
L_{\Sigma} \colon [\mathcal{C}^{2,\alpha}(\Sigma)]_{\theta} \times \mathbb{R} &\to \mathcal{C}^{0,\alpha}(\Sigma) \times \mathbb{R}\\
(\omega, c)&\mapsto \left(L_{\Sigma} \omega + c, \int_{\Sigma} \omega\;da_{g}  \right),
\end{split}
\end{equation}
is injective, where the subscript $\theta$ is meant to point out that functions in $[\mathcal{C}^{2,\alpha}(\Sigma)]_{\theta}$ satisfy $B_{\Sigma}\omega = 0$ on $\partial \Sigma.$ Observe that $L_{\Sigma}$ is self-adjoint with respect to the scalar product
$$
\left\langle (v,c),(\omega,d) \right\rangle  \coloneqq \int_{\Sigma} v\omega\;da_{g} + cd,
$$
in $L^2(\Sigma) \times \mathbb{R}.$ Since $L_{\Sigma}$ is elliptic, the notion of volume-nondegeneracy is equivalent to invertibility.

We remark that volume-nondegeneracy is not a condition too restrictive, once that it is stable under small perturbations of metric and it holds for a generic choice of metric, as a consequence of the work \cite{white} of White.

\subsubsection{Statement of the result}

\begin{thm} \label{thm.main}
Let $(M,g)$ be a compact smooth oriented Riemannian manifold of dimension $n \geq 2,$ with smooth boundary $\partial M \neq \emptyset.$ If $\Sigma_{0} \subset M$ is a $\theta$-admissible volume-nondegenerate constant mean curvature hypersurface, then for some $\varepsilon_{0} > 0,$ there exists a family $\{ u_{\varepsilon} \}_{\varepsilon \in (0,\varepsilon_{0})}$ of critical points of $E_{\varepsilon}$ under the constraint $V(u) = |M^{+}(\Sigma_{0})| - |M^{-}(\Sigma_{0})|$ converging uniformly to $\pm 1$ on compact subsets of $M^{\pm}(\Sigma_{0}),$ respectively. 
\end{thm}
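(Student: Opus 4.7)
The plan is to perform a Lyapunov--Schmidt reduction around a finite-dimensional family of approximate solutions built from the one-dimensional heteroclinic profile and the hypersurface $\Sigma_{0}$, in the spirit of Pacard--Ritor\'e \cite{pacard-ritore}, with careful attention to the nonlinear Neumann condition \eqref{eq.sigma} at $\partial M$. Let $\mathbb{H}$ denote the unique solution of $\mathbb{H}'' = W'(\mathbb{H})$, $\mathbb{H}(0) = 0$, $\mathbb{H}(\pm\infty) = \pm 1$. I would first fix Fermi-type coordinates $(y,z)$ on a tubular neighbourhood of $\Sigma_{0}$ adapted to the contact angle $\theta$, so that the slice $\{z=0\}$ is $\Sigma_{0}$ and the normal lines terminate on $\partial M$ compatibly with $\theta$-admissibility. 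For each $w \in [\mathcal{C}^{2,\alpha}(\Sigma_{0})]_{\theta}$ of small norm, put $\bar{u}_{\varepsilon,w}(y,z) = \mathbb{H}\bigl((z-w(y))/\varepsilon\bigr)$ in this neighbourhood, interpolating with $\pm 1$ outside. The function $w$ parametrises the deformation of the approximate nodal set, and should ultimately be determined so that the full equation \eqref{eq.2.7} is solved.

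Next, writing $u = \bar{u}_{\varepsilon,w} + v$ and linearising \eqref{eq.2.7}, the operator $-\varepsilon^{2}\Delta_{g} + W''(\bar{u}_{\varepsilon,w})$ with its $\varepsilon$-dependent Robin-type boundary condition is, in normal coordinates, an $\varepsilon$-perturbation of the one-dimensional operator $-\partial_{\zeta\zeta} + W''(\mathbb{H})$ on $\mathbb{R}$, whose $L^{2}$-kernel is spanned by $\mathbb{H}'$. Imposing the slice-wise orthogonality $\int v(y,\cdot)\,\mathbb{H}'(\cdot/\varepsilon) = 0$, one obtains uniform invertibility on the orthogonal complement, and a standard contraction mapping in weighted H\"older spaces then yields $v = v(\varepsilon,w,\lambda)$ of size $O(\varepsilon^{2})$ in a suitable norm.

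Projecting the full equation and the boundary condition back onto the approximate kernel direction $\mathbb{H}'$ and integrating across the tubular direction produces, after dividing by the normalising constant $c_{W} = \int_{\mathbb{R}}(\mathbb{H}')^{2}$, a reduced bifurcation system of the form, to leading order in $\varepsilon$,
\begin{equation*}
J_{\Sigma_{0}} w + \lambda = \mathcal{F}_{1}(\varepsilon, w, \lambda) \text{ on } \Sigma_{0}, \qquad B_{\Sigma_{0}} w = \mathcal{F}_{2}(\varepsilon, w, \lambda) \text{ on } \partial \Sigma_{0},
\end{equation*}
with $\mathcal{F}_{i}$ small nonlinear remainders, the operators $J_{\Sigma_{0}}$ and $B_{\Sigma_{0}}$ being exactly those in \eqref{eq.3.2} and \eqref{eq.3.4}. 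The constraint $V(u) = |M^{+}(\Sigma_{0})| - |M^{-}(\Sigma_{0})|$ translates, at leading order, into $\int_{\Sigma_{0}} w\,da_{g} = \mathcal{F}_{3}(\varepsilon,w,\lambda)$. This is precisely a small perturbation of $L_{\Sigma_{0}}(w,\lambda) = 0$ for the operator of \eqref{eq.3.6}, which is an isomorphism by the volume-nondegeneracy hypothesis. A further contraction mapping thus produces a unique small $(w_{\varepsilon}, \lambda_{\varepsilon})$, and $u_{\varepsilon} = \bar{u}_{\varepsilon, w_{\varepsilon}} + v(\varepsilon, w_{\varepsilon}, \lambda_{\varepsilon})$ is the desired critical point; convergence to $\pm 1$ on compacts of $M^{\pm}(\Sigma_{0})$ follows from the exponential decay of $\mathbb{H} \mp 1$.

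The main obstacle is the boundary analysis near $\partial\Sigma_{0} \subset \partial M$. The nonlinear Neumann condition $-\varepsilon\, g(\nabla u, \nu_{\partial M}) = \cos\theta\,\sqrt{2W(u)}$ must be satisfied by $\bar{u}_{\varepsilon,w}$ up to errors compatible with the reduction, and the linearisation at the boundary has to reproduce exactly the operator $B_{\Sigma_{0}}$ in \eqref{eq.3.4}, including the geometric terms $II_{\partial M}(\tau_{\partial M},\tau_{\partial M})$ and $\cos(\theta)\,II_{\Sigma_{0}}(\tau_{\Sigma},\tau_{\Sigma})$ weighted by $1/\sin(\theta)$; this is where the full capillary (rather than free-boundary) geometry enters and where the Fermi coordinate choice near the corner $\partial\Sigma_{0}$ must be especially careful. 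Establishing this together with $\varepsilon$-uniform Schauder estimates for the resulting mixed Robin boundary value problem is the technical heart of the argument; once these are in place, volume-nondegeneracy of \eqref{eq.jacozin} feeds directly into the concluding fixed-point step.
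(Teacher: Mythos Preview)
Your proposal is correct and follows essentially the same Lyapunov--Schmidt strategy as the paper: twisted Fermi coordinates adapted to the angle $\theta$, an approximate solution built from the heteroclinic profile over a graph $\zeta$ (your $w$), linearisation with approximate kernel $\dot u_\varepsilon$, and a reduced system that is exactly $L_{\Sigma_0}(\zeta,\lambda)=\text{small}$ together with the mass constraint, closed by volume-nondegeneracy. The only organisational difference is that the paper, following Pacard~\cite{pacard}, splits the correction as $v=\chi_4 v^\sharp+v^\flat$ and inverts two separate model operators (one on $\mathbb{R}\times\Sigma$ with the orthogonality to $\dot u_\varepsilon$, one globally on $M$ with potential $W''(\pm1)$), whereas you gesture toward a single contraction in weighted H\"older spaces as in the original Pacard--Ritor\'e; both schemes handle the same difficulty (the linearised potential being $W''(u_\varepsilon)$ near $\Sigma_0$ and $W''(\pm1)$ away) and lead to the same fixed-point conclusion.
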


The solutions constructed in Theorem \ref{thm.main} are solutions of

\begin{equation} \label{eq.AC}
\left\lbrace
\begin{array}{l}
-\varepsilon^2 \Delta_{g} u + W' (u ) = \varepsilon \lambda\;\;\text{in}\;M,\\\\
-\varepsilon \partial_{\nu_{\partial M}} u = \sigma'(u)\;\;\text{on}\;\partial M,\\\\
\int_{M} u\;dv_{g} = c_{0}|M|,
\end{array}
\right.
\end{equation}
where the constant $c_{0} \in (-1,1)$ is fixed so that 
$$
c_{0} |M| = |M^{+}(\Sigma_{0})| - |M^{-}(\Sigma_{0})|.
$$

The expansions of $u_{\varepsilon}$ in terms of $\varepsilon,$ of the Lagrange multiplier $\lambda,$ namely 
$$
\lambda = \dfrac{1}{2}c_{*}nH_{\Sigma_{0}} + \mathcal{O}(\varepsilon),
$$
where the constant $c_{*}$ is given by
$$
c_{*} \coloneqq \int_{-1}^{1} \sqrt{2 W(s)}\;ds,
$$
are the same as in \cite{pacard-ritore}. For the energy $E_{\varepsilon}(u_{\varepsilon}),$ the expansion is given by
$$
E_{\varepsilon}(u_{\varepsilon}) = 2 \varepsilon c_{*} |\Sigma_{0}| + 	\varepsilon c_{*} \cos \theta  |M^{+}(\partial \Sigma_{0})| + \mathcal{O}(\varepsilon^2).
$$

We remark that the choice of $\sigma$ in \eqref{eq.sigma} can be more general, once it satisfies $|\sigma'(s)| \leq C \sqrt{2 W(s)}$ for some $C \in [0,1)$ and for all $s \in \mathbb{R},$ which ensures that $\theta$ is strictly positive, and we avoid the so called perfect wetting effect (see \cite{cahn}), when the contact energy density $|\sigma(1) - \sigma(-1)|$ of the interface $\{u_{\varepsilon} = 1\}$ with $\partial \Omega$ approaches to the surface density $c_{0}$ of the interface inside $M.$

Let us give a rough outline of the proof in order to the organization of the paper be clearer. 

Essentially we follow the proof of Pacard in \cite{pacard}, merging with some techniques in \cite{pacard-ritore}, and the main tool is the infinite dimensional Lyapunov-Schmidt reduction argument.

We want a solution $u$ of \eqref{eq.AC} written as the sum of three functions
$$
u = u_{\varepsilon} + v^{\sharp} + v^{\flat},
$$
where $u_{\varepsilon}$ is an approximate solution, which is $\pm 1$ outside a tubular neighborhood of $\Sigma_{0}$ and inside it is the one-dimensional model solution of
$$
u_{1}'' - W'(u_{1}) = 0\;\;\text{in}\;\;\mathbb{R},\;\;\;u_{1}(0) = 0,\;\;\; u_{1}(x) \to \pm 1\;\;\text{as}\;\;x \to \pm \infty,
$$
analyzed in Section \ref{sec.approx}; also, $v^{\sharp}$ is supported in a small tubular neighborhood of $\Sigma_{0}$ and $v^{\flat}$ is globally defined in $M.$ Since we parametrize a neighborhood of $\Sigma_{0}$ using twisted Fermi coordinates, which we recall in Section \ref{sec.fermi}, which are modified Fermi coordinates so that it is determined by the flow of a vector field normal in $\Sigma_{0}$ and tangent to $\partial M,$ in order to have a fibration of $\Sigma_{0},$ we can write the neighborhood as a product of the type $I \times \Sigma_{0},$ where $I \subset \mathbb{R}$ is a small interval, and we can identify each parallel hypersurface $\Sigma$ as a graph of some small $\zeta.$ For this form of $u,$ in Section \ref{sec.equiv}, considering linearizations of $W'$ and $\sigma',$ we have that \eqref{eq.AC} is solved if we find $(v^{\sharp},v^{\flat},\zeta)$ satisfying 
\begin{equation} \label{eq.equiv00}
\left\lbrace
\begin{split}
\mathcal{L}_{\varepsilon} v^{\flat} &= f_{1,\varepsilon},\;\;\text{in}\;\;M,\\
L_{\varepsilon} v^{\sharp} &= f_{2,\varepsilon},\;\;\text{in}\;\;M,\\
- \varepsilon J_{\Sigma} \zeta\; \dot{u}_{\varepsilon} &= f_{3,\varepsilon},\;\;\text{in}\;\;M,\\
\mathcal{P}_{\varepsilon} v^{\flat} &= f_{4,\varepsilon},\;\;\text{on}\;\;\partial M,\\
P_{\varepsilon} v^{\sharp} &= f_{5,\varepsilon},\;\;\text{on}\;\;\partial M,\\
- \varepsilon B_{\Sigma} \zeta\; \dot{u}_{\varepsilon} &=  f_{6,\varepsilon},\;\;\text{on}\;\;\partial M,\\
\int_{M} \Tilde{u}_{\varepsilon}\;dv_{g} - c_{0}|M| &=  - \int_{M} \left(\chi_{4} v^{\sharp} + v^{\flat}\right)\;dv_{g},
\end{split}
\right.
\end{equation}
for some functions $f_{i,\varepsilon}$ depending on $\varepsilon,$ $v^{\sharp},$ $v^{\beta}$ and $\zeta,$ for each $i = 1,\cdots,6,$ where
$J_{\Sigma}$ and $B_{\Sigma}$ are defined in \eqref{eq.3.2} and \eqref{eq.3.4}, $L_{\varepsilon}$ and $P_{\varepsilon}$ are defined in \eqref{eq.linearops} and $\mathcal{L}_{\varepsilon}$ and $\mathcal{P}_{\varepsilon}$ are defined in \eqref{eq.nlinearops}. We first deal with the subproblems
$$
\left\lbrace
\begin{split}
L_{\varepsilon} v^{\sharp} &= f_{2,\varepsilon},\;\;\text{in}\;\;M,\\
P_{\varepsilon} v^{\sharp} &= f_{5,\varepsilon},\;\;\text{on}\;\;\partial M,
\end{split}
\right.
$$
in Section \ref{sec.lin}, then with
$$
\left\lbrace
\begin{split}
\mathcal{L}_{\varepsilon} v^{\flat} &= f_{1,\varepsilon},\;\;\text{in}\;\;M,\\
\mathcal{P}_{\varepsilon} v^{\flat} &= f_{4,\varepsilon},\;\;\text{on}\;\;\partial M,
\end{split}
\right.
$$
in Section \ref{sec.nlin}, and using the nondegeneracy hypothesis in Definition \ref{dfn.3.3}, we proceed to solve \eqref{eq.equiv00} as a fixed point problem in Section \ref{sec.proof}.

\begin{rmk}
The behavior of $\sigma$ and its relation with the potential controlling the diffusivity in the interior is essential to the proof. In general, it is not expected that our approach of linearization of the boundary condition to work in general elliptic equations with nonlinear Neumann conditions (see \cite{barles} for instance).
\end{rmk}

In particular, in an bounded domain $\Omega \subset \mathbb{R}^{n}$ with smooth boundary, we can conclude the following.

\begin{corol}
Given a nondegenerate minimal submanifold $\Gamma$ on $\partial \Omega$ of dimension $m,$ with $1\leq m \leq n - 2,$ there is a family of solutions of \eqref{eq.AC} whose nodal sets concentrate along this submanifold.
\end{corol}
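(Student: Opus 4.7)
The plan is to reduce the corollary to Theorem \ref{thm.main} by constructing, for every sufficiently small $\rho > 0$, a volume-nondegenerate free boundary ($\theta = \pi/2$) constant mean curvature hypersurface $\Sigma_\rho \subset \Omega$ that lies in a $C\rho$-tubular neighborhood of $\Gamma$, then applying the main theorem to each $\Sigma_\rho$ and performing a diagonal extraction $\rho = \rho(\varepsilon) \to 0$. The resulting family $\{u_\varepsilon\}$ has nodal sets within $O(\rho(\varepsilon))$ of $\Gamma$, hence concentrating on $\Gamma$.

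The construction of $\Sigma_\rho$ follows the template of Mahmoudi, Mazzeo and Pacard \cite{mahmoudi-mazzeo-pacard}, adapted to the free boundary setting. For each $p \in \Gamma$, the normal space $N_p\Gamma \subset T_p\Omega$ splits as $\mathbb{R}\,\eta(p) \oplus N_p(\Gamma \subset \partial \Omega)$ with $\eta = -\nu_{\partial \Omega}$ pointing into $\Omega$; exponentiating the closed hemisphere $\{v \in N_p\Gamma : |v| = \rho,\ \langle v, \eta(p)\rangle \geq 0\}$ fiberwise over $\Gamma$ yields an approximate capillary hypersurface $\mathcal{T}_\rho$ of dimension $(n-m-1)+m = n-1$ meeting $\partial \Omega$ orthogonally along a set close to $\Gamma$. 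I would seek $\Sigma_\rho$ as a normal graph $\rho \phi$ over $\mathcal{T}_\rho$ and close the CMC plus orthogonality system by a Lyapunov-Schmidt argument in Fermi coordinates rescaled by $\rho$. To leading order, the linearization block-decouples into the Jacobi operator of the model Euclidean hemisphere acting on fiber modes, whose kernel consists of infinitesimal translations/rotations, and the Jacobi operator of $\Gamma \subset \partial \Omega$ acting on base modes. The hypothesized nondegeneracy of $\Gamma$ disposes of the base modes, while the finite-dimensional fiber kernel is absorbed by adjusting the position and radius of $\mathcal{T}_\rho$.

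Volume-nondegeneracy of $\Sigma_\rho$ then follows from the same spectral splitting: since $\theta = \pi/2$, the boundary operator in \eqref{eq.3.4} simplifies and, after rescaling, $L_{\Sigma_\rho}$ from \eqref{eq.3.6} converges to a block-diagonal model whose invertibility is immediate from the nondegeneracy of $\Gamma$ and an explicit calculation on the hemisphere, possibly after removing a discrete set of degenerate radii. Theorem \ref{thm.main} then applies to each admissible $\Sigma_\rho$ with constraint $c_0 = c_0(\rho)$, producing $\varepsilon_0(\rho) > 0$ and the required solutions $u_{\varepsilon,\rho}$.

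The principal obstacle is the singular Lyapunov-Schmidt step, carried out uniformly in $\rho$: the linearization degenerates as $\rho \to 0$, the free boundary and CMC conditions must be treated simultaneously, and the volume constraint (present also in Definition \ref{dfn.3.3}) couples the otherwise independent fiber and base modes. A secondary difficulty is tracking the dependence of $\varepsilon_0(\rho)$ and the associated Lipschitz constants in Theorem \ref{thm.main} as $\rho \to 0$ and as $c_0(\rho) \to \pm 1$, carefully enough that the diagonal choice $\rho(\varepsilon)$ is actually realizable.
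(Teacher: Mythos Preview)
Your approach is essentially the one taken in the paper: produce a family of volume-nondegenerate free boundary CMC hypersurfaces $\Sigma_\rho$ concentrating on $\Gamma$, apply Theorem~\ref{thm.main} to each, and diagonalize. The paper, however, does not carry out the construction of $\Sigma_\rho$ from scratch; it simply invokes the work of Fall and Mahmoudi \cite{fall-mahmoudi}, which furnishes exactly such a sequence of $\theta$-admissible volume-nondegenerate CMC hypersurfaces concentrating along a given nondegenerate minimal submanifold of $\partial\Omega$. Your sketch via hemispherical normal tubes and the fiber/base splitting of the Jacobi operator is faithful to what Fall--Mahmoudi actually do, but reproducing it here is unnecessary once that reference is cited.

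Your secondary concern --- tracking $\varepsilon_0(\rho)$ and the Lipschitz constants uniformly as $\rho \to 0$ and $c_0(\rho) \to \pm 1$ --- is also not needed. For each fixed $\rho_k$ one has \emph{some} $\varepsilon_0(\rho_k) > 0$ and a family $\{u^k_\varepsilon\}_{\varepsilon \in (0,\varepsilon_0(\rho_k))}$ from Theorem~\ref{thm.main}; choosing any $\varepsilon_k \in (0,\varepsilon_0(\rho_k))$ small enough that the nodal set of $u^k_{\varepsilon_k}$ lies within $1/k$ of $\Sigma_{\rho_k}$ already gives a sequence whose nodal sets converge to $\Gamma$. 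No uniformity in $\rho$ is required for this plain diagonal extraction, which is exactly how the paper phrases it.
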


In fact, by the work \cite{fall-mahmoudi} of Fall and Mahmoudi, there exists a sequence of $\theta$-admissible volume-nondegenerate constant mean curvature hypersurface $\Sigma_{k}$ concentrating along $\Gamma.$ By Theorem \ref{thm.main}, for each $\Sigma_{k},$ there exists a family $\{u_{\varepsilon}^{k}\}_{\varepsilon}$ of solutions of \eqref{eq.AC} whose nodal sets concentrate along $\Sigma_{k}.$ Up to a subsequence, we have that the nodal sets of the sequence $\{u_{\varepsilon}^{k}\}_{k}$ concentrate along $\Gamma.$

\subsection{Acknowledgements}
I would like to express my deep gratitude to Professor Fernando Cod\'a Marques for presenting me this problem, guidance throughout its development and support while visiting him at Princeton University during the 2017-2018 academic year, under the support of CAPES-Brazil. I am very thankful to the the Department of Mathematics for its hospitality and generosity.

\section{Twisted Fermi coordinates} \label{sec.fermi}
In this section, we recall the definition of twisted Fermi coordinates introduced in \cite{pacard-ritore}. Suppose that $\Sigma$ is a $\theta$-admissible hypersurface of $M$ and recall that we consider $M$ as a subdomain of the larger Riemannian manifold $(\Tilde{M},\Tilde{g}),$ with $\Tilde{g}|_{M} = g.$ Denote by $\Tilde{\Sigma}$ the extension of $\Sigma$ in $\Tilde{M}.$

We define Fermi coordinates by
$$
Z(z,y) \coloneqq \exp_{y}(z\nu_{\Sigma}(y)),\;\;z \in \mathbb{R},\;\;y \in \Sigma.
$$
The implicit function theorem implies that $Z$ is a local diffeomorphism from a neighborhood of a point of $(0,y) \in \mathbb{R} \times \Sigma$ onto a neighborhood of $\mathbb{R}^{n}.$ Provided that $\tau_{0}$ is chosen small enough, we can define Fermi coordinates in 
$$
V_{\tau_{0}}(\Tilde{\Sigma}) \coloneqq \{Z(z,y) \in \Tilde{M};\; z \in (-\tau_{0},\tau_{0}),\;\; y \in \Tilde{\Sigma}\}.
$$
Since $\Tilde{\partial_{z}} = \frac{\partial}{\partial z}$ is defined in $V_{\tau_{0}}(\Tilde{\Sigma})$ and $\Sigma$ meets $\partial M$ with constant angle $\theta > 0,$ we can reduce $\tau_{0}$ so that $\nu_{\partial M}$ does not coincide with $\Tilde{\partial_{z}}$ in $V_{\tau_{0}}(\Tilde{\Sigma}).$

Consider a collar neighborhood of $\partial M$ of radius $\varepsilon_{0} > 0$ and set $\mathcal{V}$ the extension of $\nu_{\partial M}$ given by unit normal vector fields to each parallel hypersurface to $\partial M.$ Consider a smooth cut-off function $\eta$ identically $1$ if $(-\infty,\varepsilon_{0}/2)$ and equal to $0$ in $(\varepsilon_{0},\infty),$ and let
$\Tilde{\chi} \coloneqq \eta(d_{\partial M}),$ where $d_{\partial M}$ is the distance function to $\partial M.$ Define the vector field
$$
X \coloneqq \dfrac{\Tilde{\partial_{z}} - \left\langle  \Tilde{\partial_{z}}, \Tilde{\chi} \mathcal{V} \right\rangle \Tilde{\chi} \mathcal{V}}{\sqrt{1 - (2 - \Tilde{\chi}^{2})\left\langle  \Tilde{\partial_{z}}, \Tilde{\chi} \mathcal{V} \right\rangle^{2}}}.
$$
This vector field is tangent to $\partial M,$ $X = \Tilde{\partial_{z}}$ whenever $d_{\partial M} \geq \varepsilon_{0}$ and 
$$
\left\langle X, \Tilde{\partial_{z}} \right\rangle = \dfrac{1 - \left\langle  \Tilde{\partial_{z}}, \Tilde{\chi} \mathcal{V} \right\rangle^{2}}{\sqrt{1 - (2 - \Tilde{\chi}^{2})\left\langle  \Tilde{\partial_{z}}, \Tilde{\chi} \mathcal{V} \right\rangle^{2}}},
$$ 
so that on $\partial \Sigma,$ we have $X = \sin \theta \nu_{\Sigma} - \cos \theta \tau_{\partial \Sigma}.$

Considering $\{F_{z}\}_{z}$ the flow associated to $X,$ we note that it satisfies $F_{z}(\Sigma) \subset M,$ $F_{z}(\partial \Sigma) \subset \partial M$ and in fact, 
$$
(z,y) \in (-\tau_{0},\tau_{0}) \times \Sigma \mapsto F_{z}(y) \in V_{\tau_{0}}(\Tilde{\Sigma}) \cap M
$$
is a diffeomorphism. The twisted Fermi coordinates relative to $\Sigma$ are given by
$$
Y(z,y) \coloneqq F_{z}(y).
$$
It holds that wherever $d_{\partial M} \geq \varepsilon_{0},$ we have $Y(z,y) = Z(z,y).$


In summary, $Y \colon (-\tau_{0},\tau_{0}) \times \Sigma \to M$ gives a variation of $\Sigma$ such that, for every $z \in (-\tau_{0},\tau_{0}),$ the map $F_{z} = Y(z,\cdot) \colon \Sigma \to M$ is an immersion of $\Sigma$ in $M$ such that $F_{z}(\partial \Sigma) \subset \partial M.$ We denote $\Sigma_{z} \coloneqq F_{z}(\Sigma)$ and the subscript $z$ will be used to denote quantities associated to it, such as the induced metric $g_{z}$ and the unit normal $\nu_{z}$ to $\Sigma_{z}.$ 


\section{The solution profile} \label{sec.approx}
In this section, we use the twisted Fermi coordinates construct an approximate solution of \eqref{eq.AC} whose nodal set is close to $\Sigma.$ 

Recall that the unique heteroclinic solution $u_{1}$ of
$$
u_{1}'' - W'(u_{1}) = 0\;\;\text{in}\;\;\mathbb{R},\;\;\;u_{1}(0) = 0,\;\;\; u_{1}(x) \to \pm 1\;\;\text{as}\;\;x \to \pm \infty,
$$
has the asymptotic properties
$$
\begin{array}{l}
|\partial_{t}^{k}(u_{1}(t) - 1)| \leq c_{k} e^{-\gamma_{+}|t|},\;\;t \geq 0,\\\\
|\partial_{t}^{k}(u_{1}(t) + 1)| \leq c_{k} e^{\gamma_{-}|t|},\;\;t \leq 0,
\end{array}
$$
for some constant $c_{k} > 0$ for each $k \in \mathbb{N},$ where $\gamma_{\pm} > 0$ are the indicial roots defined by
$$
\gamma_{\pm}^{2} \coloneqq W''(\pm 1).
$$ 

Define
$$
u_{\varepsilon} \coloneqq u_{1}\left(\dfrac{z}{\varepsilon} \right),\;\;\;\dot{u}_{\varepsilon} \coloneqq u'_{1}\left(\dfrac{z}{\varepsilon} \right) = \varepsilon u'_{\varepsilon}(z),\;\;\;\ddot{u}_{\varepsilon} \coloneqq u''_{1}\left(\dfrac{z}{\varepsilon} \right) = \varepsilon^{2} u''_{\varepsilon}(z).
$$

Given any smooth function $\zeta$ small enough defined on $\overline{\Sigma},$ consider
$$
\Sigma_{\zeta} \coloneqq \{Y(\zeta(y),y) \in M;\; y \in \overline{\Sigma}\}.
$$
In a tubular neighborhood of $\Sigma,$ we write
$$
Y^{*} u(z,y) = \overline{u}(z - \zeta(y),y).
$$
For convenience, denote $t \coloneqq z - \zeta(y).$

Using the expressions of the Laplacian and the normal derivative with respect to $\nu_{\partial M}$ on twisted Fermi coordinates, we can rewrite \eqref{eq.AC} as
\begin{equation} \label{eq.aux}
\left\lbrace
\begin{array}{l}
-\varepsilon^{2} \left[ (1 + \Vert d\zeta \Vert^{2}_{g_{z}}) \partial_{t}^{2} \overline{u} + \Delta_{g_{z}} \overline{u} - (\Delta_{g_{z}} \zeta + H_{z} ) \partial_{t} \overline{u} - 2(d\zeta, d\partial_{t} \overline{u})_{g_{z}} \right] |_{z = t + \zeta} + W'(\overline{u}) = \varepsilon \lambda\;\;\text{in}\;M,\\\\
-\varepsilon \left[ -(\partial_{t} \overline{u} \;d\zeta ,\nu_{\partial M})_{g_{z}} + (\nabla \overline{u} ,\nu_{\partial M})_{g_{z}} +  \right]|_{z = t + \zeta} - \sigma'(\overline{u}) = 0 \;\;\text{on}\;\partial M,
\end{array}
\right.
\end{equation}
for $t > 0$ close to $0$ and $y \in \Sigma.$ 

Setting
$$
\overline{u}(t,y) \coloneqq u_{\varepsilon}(t) + v(t,y),
$$
then \eqref{eq.aux} becomes
\begin{equation}
\left\lbrace
\begin{array}{l}
\mathfrak{N}(v,\zeta) = \varepsilon \lambda\;\;\text{in}\;M,\\\\
\mathfrak{M}(v,\zeta) = 0\;\;\text{on}\;\partial M,
\end{array}
\right.
\end{equation}
where
$$
\begin{array}{ll}
\mathfrak{N}(v,\zeta) \coloneqq R(v,\zeta) + \varepsilon H_{z} \dot{u}_{\varepsilon} + Q_{\varepsilon}(v),  
\end{array}
$$
and
$$
\begin{array}{ll}
\mathfrak{M}(v,\zeta) \coloneqq  S(v,\zeta) + \left(\left\langle \nu_{\Sigma_{t}}, \nu_{\partial M} \right\rangle - \cos \theta \right)\dot{u}_{\varepsilon} + \Tilde{Q}_{\varepsilon}(v)
\end{array}
$$
with 
$$
Q_{\varepsilon}(v) \coloneqq  W'(u_{\varepsilon} + v) - W'(u_{\varepsilon}) - W''(u_{\varepsilon})v ,
$$
$$
\Tilde{Q}_{\varepsilon}(v) \coloneqq - \left( \sigma'(u_{\varepsilon} + v) - \sigma'(u_{\varepsilon}) - \sigma''(u_{\varepsilon})v \right)		
$$
and $R,$ $S$ functions so that $R(0,0) = S(0,0) = 0.$ We then have
$$
\mathfrak{N}(0,0) = \varepsilon H_{t} \dot{u}_{\varepsilon}\;\;\text{and}\;\;\mathfrak{M}(0,0) = \left(\left\langle \nu_{\Sigma_{t}}, \nu_{\partial M} \right\rangle - \cos \theta \right)\dot{u}_{\varepsilon}.
$$
Also recall that
$$
H_{t} = H_{0} + t\left( |II_{\Sigma}|^2 + \Ric_{g}(\nu_{\Sigma},\nu_{\Sigma}) \right) + \mathcal{O}(t^2),
$$
and
$$
\left\langle \nu_{\Sigma_{t}}, \nu_{\partial M} \right\rangle - \cos \theta  = t \left( -\sin \theta \frac{\partial v}{\partial \tau_{\partial \Sigma}} + \left[ \cos \theta II_{\Sigma}(\tau_{\partial \Sigma},\tau_{\partial \Sigma}) + II_{\partial M}(\tau_{\partial M},\tau_{\partial M}) \right]v\right) + \mathcal{O}(t^{2}),
$$
where  $v = g(X,\nu_{\Sigma})$ and $H_{0}$ is constant, since $\Sigma$ is a capillar hypersurface with constant mean curvature $H_{0}$ and fixed contact angle $\theta.$ The following estimates are direct.

\begin{lem} \label{lem.4}
In a fixed neighborhood of $\Sigma,$ there exists a constant $C > 0$ such that
$$
|\mathfrak{N}(0,0)|_{\mathring{g}} \leq \varepsilon,\;\;\text{and}\;\;|\mathfrak{M}(0,0)|_{\mathring{g}} \leq C  \varepsilon.
$$

\end{lem}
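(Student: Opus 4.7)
The plan is to estimate both $\mathfrak{N}(0,0)$ and $\mathfrak{M}(0,0)$ directly from the explicit expressions displayed just above the statement, using only smoothness of the background geometry on the tubular neighborhood, the exponential decay of $u_1'$, and the capillary condition on $\Sigma$.

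For the interior term $\mathfrak{N}(0,0) = \varepsilon H_t\,\dot{u}_\varepsilon(z)$, I would first observe that the mean curvature $H_t$ of the parallel hypersurface $\Sigma_t$ is smooth in $(t,y)$ throughout the Fermi tubular neighborhood $V_{\tau_0}(\Sigma) \cap M$, and hence is uniformly bounded by a constant depending only on $\Sigma$ and $g$. The factor $\dot{u}_\varepsilon(z) = u_1'(z/\varepsilon)$ is uniformly bounded on $\mathbb{R}$ thanks to the exponential decay of $u_1'$ recalled at the start of this section. Multiplying these two bounds yields $|\mathfrak{N}(0,0)|_{\mathring{g}} \leq C\varepsilon$ immediately.

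For the boundary term $\mathfrak{M}(0,0) = (\langle \nu_{\Sigma_t}, \nu_{\partial M}\rangle - \cos\theta)\,\dot{u}_\varepsilon(z)$, the key input is the capillary hypothesis $\langle \nu_{\Sigma_0}, \nu_{\partial M}\rangle = \cos\theta$ along $\partial\Sigma$, which forces the scalar prefactor to vanish at $t = 0$. The Taylor expansion in $t$ displayed just before the lemma then gives that this prefactor is $O(t)$ uniformly on the portion of $\partial M$ sitting in the tubular neighborhood, with constants controlled by $\|II_\Sigma\|_\infty$, $\|II_{\partial M}\|_\infty$, and $\sin\theta > 0$. Writing $t = \varepsilon(t/\varepsilon)$ and using that $s \mapsto s\,u_1'(s)$ is uniformly bounded on $\mathbb{R}$ (again by the exponential decay of $u_1'$), I obtain
$$
|\mathfrak{M}(0,0)| \;\leq\; C\,|t|\,|u_1'(t/\varepsilon)| \;=\; C\varepsilon\,\bigl|\tfrac{t}{\varepsilon}\,u_1'(t/\varepsilon)\bigr| \;\leq\; C\varepsilon,
$$
as claimed.

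I do not anticipate any real obstacle: both bounds reduce to direct pointwise estimates. The only conceptual point worth flagging is that the matching between the $O(t)$ vanishing of the angle defect (provided by the capillary condition) and the $\varepsilon$-scaling built into $\dot{u}_\varepsilon$ is precisely what extracts a clean power of $\varepsilon$ from the boundary discrepancy, mirroring the mechanism used in the free-boundary computation of \cite{pacard-ritore}.
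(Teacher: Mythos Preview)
Your proposal is correct and is essentially what the paper intends: the paper simply states ``The following estimates are direct'' without giving a proof, and your argument supplies precisely the direct computation that makes this claim honest. The only point worth noting is that for $\mathfrak{M}(0,0)$ the paper's statement of $C\varepsilon$ (rather than merely $C$) genuinely relies on the $O(t)$ vanishing of the angle defect combined with the boundedness of $s\,u_1'(s)$, exactly as you explain; this is the one step that is not entirely trivial, and you have identified it correctly.
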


Given a function $f$ defined in $\mathbb{R} \times \Sigma,$ we define
\begin{equation} \label{eq.ortinha}
\Pi(f) \coloneqq \dfrac{1}{\varepsilon c} \int_{-\infty}^{\infty} f(t,y) \dot{u}_{\varepsilon}(t) \;dt,\;\;\;y \in \Sigma
\end{equation}
where
$$
c \coloneqq \dfrac{1}{\varepsilon} \int_{-\infty}^{\infty} \dot{u}_{\varepsilon}^{2}(t)\;dt = \int_{-\infty}^{\infty} (u'_{1})^{2}(t)\;dt,
$$
so that $\Pi(\dot{u}_{\varepsilon}) = 1.$ We have
$$
\int_{\mathbb{R}} H_{t} \dot{u}_{\varepsilon}^{2}\;dt = \mathcal{O}(\varepsilon)\;\;\;\text{and}\;\;\int_{\mathbb{R}}  (\left\langle \nu_{\Sigma_{t}}, \nu_{\partial M} \right\rangle - \cos \theta )\dot{u}_{\varepsilon}^{2}\;dt = \mathcal{O}(\varepsilon^{2})
$$
since the integral of $t \dot{u}_{\varepsilon}^{2}$ is $0.$ Using this property, we have the following.

\begin{lem} \label{lem.5}
There exists a constant  $C > 0$ such that
$$
|\Pi\left(\chi\mathfrak{N}(0,0)\right)|_{\mathring{g}} \leq C \varepsilon,\;\;\text{and}\;\;|\Pi\left(\chi\mathfrak{M}(0,0)\right)|_{\mathring{g}} \leq C  \varepsilon^{2}.
$$
\end{lem}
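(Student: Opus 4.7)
The strategy is to substitute the expansions of $H_t$ and of $\langle \nu_{\Sigma_t}, \nu_{\partial M}\rangle - \cos\theta$ displayed just above into the definition \eqref{eq.ortinha} of $\Pi$, then rescale $t = \varepsilon s$ and invoke the vanishing first-moment identity $\int_{\mathbb{R}} t\,\dot u_\varepsilon(t)^2\,dt = 0$ together with the exponential decay of $u_1'$.

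For the interior part, $\mathfrak{N}(0,0) = \varepsilon H_t \dot u_\varepsilon$, so one power of $\varepsilon$ cancels with the $1/(\varepsilon c)$ in $\Pi$ and one is left with
\[
\Pi\bigl(\chi\,\mathfrak{N}(0,0)\bigr)(y) = \frac{1}{c}\int_{\mathbb{R}} \chi(t)\,H_t(y)\,\dot u_\varepsilon(t)^2\,dt.
\]
Inserting $H_t = H_0 + t\bigl(|II_\Sigma|^2 + \Ric_g(\nu_\Sigma,\nu_\Sigma)\bigr) + \mathcal O(t^2)$ and using $\int_{\mathbb{R}} \dot u_\varepsilon^2\,dt = \varepsilon c$, the constant term evaluates to $H_0\,\varepsilon$ modulo an exponentially small cutoff error, the linear-in-$t$ piece vanishes up to $\mathcal O(\varepsilon^\infty)$ by the first-moment identity, and the quadratic remainder is $\mathcal O(\varepsilon^3)$ because $\int_{\mathbb{R}} s^2 (u_1'(s))^2\,ds < \infty$ after rescaling $t = \varepsilon s$. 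This yields the $\mathcal O(\varepsilon)$ bound, uniformly in $y \in \Sigma$.

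For the boundary part, the crucial gain is that the $\theta$-admissibility of $\Sigma$ forces the expansion of $\langle \nu_{\Sigma_t}, \nu_{\partial M}\rangle - \cos\theta$ to start at order $t$. Denoting by $F(y)$ the smooth coefficient of $t$ appearing in that expansion,
\[
\Pi\bigl(\chi\,\mathfrak{M}(0,0)\bigr)(y) = \frac{1}{\varepsilon c}\int_{\mathbb{R}} \chi(t)\bigl[\,tF(y) + \mathcal O(t^2)\,\bigr]\dot u_\varepsilon(t)^2\,dt.
\]
The linear term is killed by the first-moment identity (with an $\mathcal O(\varepsilon^\infty)$ cutoff correction), while the quadratic remainder contributes $\mathcal O(\varepsilon^3)$ to the integral; dividing by $\varepsilon$ produces the desired $\mathcal O(\varepsilon^2)$ bound.

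The only substantive input here is the first-moment identity $\int_{\mathbb{R}} t\,\dot u_\varepsilon^2\,dt = 0$, which follows from the evenness of $u_1'$ (itself a consequence of the symmetry built into the double-well potential $W$ together with the normalization $u_1(0)=0$). Everything else is a routine change of variables $t = \varepsilon s$ combined with the exponential decay estimates on $u_1'$ recalled at the beginning of Section \ref{sec.approx}, which guarantee that all cutoff-induced errors are of order $\mathcal O(\varepsilon^\infty)$ and can be absorbed into the stated $\mathcal O(\varepsilon)$ and $\mathcal O(\varepsilon^2)$ bounds.
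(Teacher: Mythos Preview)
Your proof is correct and follows exactly the approach the paper sketches: the paper's ``proof'' consists of the two displayed integral estimates immediately preceding the lemma, justified by the single remark that $\int_{\mathbb{R}} t\,\dot u_\varepsilon^2\,dt = 0$, followed by ``Using this property, we have the following.'' You have simply filled in the details of that sketch --- substituting the Taylor expansions of $H_t$ and of $\langle \nu_{\Sigma_t},\nu_{\partial M}\rangle - \cos\theta$, rescaling $t=\varepsilon s$, and invoking the first-moment identity to kill the linear-in-$t$ contributions --- exactly as intended.

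One minor remark: you justify $\int_{\mathbb{R}} t\,\dot u_\varepsilon^2\,dt = 0$ through the evenness of $u_1'$, which requires $W$ to be symmetric about the origin. The paper does not list this symmetry among its hypotheses (indeed it allows $\gamma_+^2 = W''(1) \neq W''(-1) = \gamma_-^2$), yet it invokes the same identity without comment; so you are not introducing any assumption the paper does not already rely on implicitly.
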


\section{The subproblem on $\mathbb{R} \times \overline{\Sigma}$} \label{sec.lin}
Consider the operators defined by
\begin{equation} \label{eq.linearops}
L_{\varepsilon} \coloneqq -\varepsilon^{2}\left( \partial_{t}^{2}  + \Delta_{\mathring{g}} \right) + W''(u_{\varepsilon})\;\;\text{and}\;\;P_{\varepsilon} \coloneqq  - \varepsilon \left\langle \nabla, \nu_{\mathbb{R} \times \partial \Sigma}\right\rangle - \sigma''(u_{\varepsilon}),
\end{equation} 
acting on functions defined on $\mathbb{R} \times \Sigma$ and $\mathbb{R} \times \partial \Sigma,$ respectively, where $\mathbb{R} \times \overline{\Sigma}$ is endowed with the product metric $ dt^{2} + \mathring{g}.$

\subsection{Injectivity}
First, let us recall the operator
$$
L_{0} \coloneqq - \left( \partial_{t}^{2} - W''(u_{1}) \right)
$$
acting on functions defined on $\mathbb{R}.$ Note that an eigenpair $(-\mu, \omega)$ of $L_{0}$ satisfy $L_{\mu} \omega = 0,$ where
$$
L_{\zeta} \coloneqq - \partial_{t}^{2} + \zeta + W''(u_{1}) ,\;\;\;\;\zeta \in \mathbb{R},
$$ 
is the operator considered in Section $7.1$ of \cite{pacard-ritore}. 



By the spectral theory for this ordinary differential operator, since the set of solutions of $L_{\zeta} \omega  = 0$ is two dimensional for each $\zeta$ and $(0,u_{1}')$ is an eigenpair of $L_{0},$ the positivity of the first eigenvalue $\mu_{1}$ implies that
\begin{equation} \label{spec}
\int_{\mathbb{R}} |\partial_{t} \omega|^{2} + W''(u_{1}) \omega^{2}\;dt \geq \mu_{1} \int_{\mathbb{R}} \omega^{2}\;dt,
\end{equation}
for all $\omega \in H^{1}(\mathbb{R})$ satisfying
\begin{equation} \label{eq.orthogt}
\int_{\mathbb{R}} \omega(t)\;u_{1}'(t)\;dt = 0.
\end{equation}

Now consider the operators
$$
L_{*} \coloneqq -\left( \partial_{t}^{2} + \Delta_{\mathbb{R}^{n-1}} - W''(u_{1}) \right)\;\;\text{and}\;\;P_{*} \coloneqq \left\langle \nabla, \nu_{\mathbb{R}\times \partial \mathbb{R}^{n-1}_{+}} \right\rangle - \sigma''(u_{1}),
$$
acting on functions defined on $\mathbb{R} \times \mathbb{R}^{n-1}$ and on $\mathbb{R} \times \partial\mathbb{R}^{n-1}_{+},$ respectively.

\begin{lem}[Corollary $7.5$ of \cite{pacard-ritore}] \label{lem.7}
Assume that $\omega \in L^{\infty}(\mathbb{R}\times \mathbb{R}^{n-1})$ satisfies $L_{*} \omega = 0\;\;\text{in}\;\;\mathbb{R} \times \mathbb{R}^{n-1}_{+}.$ Then $\omega$ only depends on $t$ and it is collinear to $u_{1}'.$
\end{lem}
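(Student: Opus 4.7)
The strategy is to decompose $\omega$ along the one-dimensional kernel of the $1$D operator $L_{0}$, which is spanned by $u_{1}'$, and to combine the spectral gap \eqref{spec} with Liouville-type rigidity on $\mathbb{R}^{n-1}$. First I set
\[
\psi(y) \coloneqq \int_{\mathbb{R}} \omega(t,y)\, u_{1}'(t)\, dt,
\]
which is well-defined and bounded on $\mathbb{R}^{n-1}$ by the exponential decay $|u_{1}'(t)|\leq C e^{-\gamma|t|}$ and the fact that $\omega \in L^{\infty}$. Since elliptic regularity yields uniform $C^{2}$ bounds on $\omega$, we may differentiate under the integral and integrate by parts twice in $t$; the boundary contributions at $t \to \pm\infty$ vanish thanks to the exponential decay of $u_{1}'$ and $u_{1}''$. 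Using $L_{*}\omega = 0$ and the identity $L_{0} u_{1}' = 0$, the computation yields $\Delta_{\mathbb{R}^{n-1}} \psi = 0$ on $\mathbb{R}^{n-1}$, so by Liouville's theorem $\psi \equiv \alpha$ for some $\alpha \in \mathbb{R}$.

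Setting $\omega_{\perp}(t,y) \coloneqq \omega(t,y) - \tilde{\alpha}\, u_{1}'(t)$ with $\tilde{\alpha} \coloneqq \alpha / \|u_{1}'\|_{L^{2}(\mathbb{R})}^{2}$, we obtain a bounded solution of $L_{*}\omega_{\perp} = 0$ satisfying the pointwise orthogonality $\int_{\mathbb{R}} \omega_{\perp}(t,y) u_{1}'(t)\,dt = 0$ for every $y \in \mathbb{R}^{n-1}$. The goal reduces to showing $\omega_{\perp} \equiv 0$. I would first establish exponential decay in $t$: since $W''(u_{1}(t)) \to \gamma_{\pm}^{2} > 0$ as $t \to \pm\infty$, there exist $T_{0} > 0$ and $\kappa > 0$ with $W''(u_{1}(t)) \geq \kappa^{2}$ for $|t| \geq T_{0}$. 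Comparing $\omega_{\perp}$ with the explicit barrier $A e^{-\kappa|t|/2}$ on $\{|t| \geq T_{0}\} \times \mathbb{R}^{n-1}$, using $\|\omega_{\perp}\|_{L^{\infty}}$ as data on the slice $\{|t| = T_{0}\}$, yields $|\omega_{\perp}(t,y)| \leq C e^{-\kappa|t|/2}$ uniformly in $y$, and by interior Schauder estimates analogous bounds hold for its derivatives.

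With this decay in $t$, the function $f(y) \coloneqq \int_{\mathbb{R}} \omega_{\perp}^{2}(t,y)\, dt$ is bounded and smooth, and $\Delta_{\mathbb{R}^{n-1}}$ commutes with the integral. Multiplying $L_{*}\omega_{\perp} = 0$ by $\omega_{\perp}$, integrating in $t$, and invoking the spectral inequality \eqref{spec} (applicable pointwise in $y$ thanks to the orthogonality \eqref{eq.orthogt}) produces
\[
\tfrac{1}{2}\, \Delta_{\mathbb{R}^{n-1}} f(y) \;=\; \int_{\mathbb{R}} |\nabla_{y} \omega_{\perp}|^{2}\, dt + \int_{\mathbb{R}}\bigl(|\partial_{t}\omega_{\perp}|^{2} + W''(u_{1})\, \omega_{\perp}^{2}\bigr)\, dt \;\geq\; \mu_{1}\, f(y).
\]
Thus $f \geq 0$ is bounded and satisfies $(-\Delta + 2\mu_{1}) f \leq 0$ on $\mathbb{R}^{n-1}$. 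Comparing $f$ on each ball $B_{R}(y_{0})$ with the radial supersolution $v_{R}$ of $(-\Delta + 2\mu_{1})v = 0$ with boundary datum $\|f\|_{\infty}$, which decays at $y_{0}$ like $e^{-\sqrt{2\mu_{1}}R}$ by standard estimates on modified Bessel functions, and letting $R \to \infty$, the weak maximum principle for the operator $-\Delta + 2\mu_{1}$ forces $f(y_{0}) = 0$. Hence $\omega_{\perp} \equiv 0$ and $\omega = \tilde{\alpha}\, u_{1}'$, as claimed.

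The main technical obstacle is the exponential decay of $\omega_{\perp}$ in $t$: without it the quantity $f$ is not a priori finite and the integration by parts cannot even be formulated. The proof thus leverages in an essential way the nondegeneracy of the wells of $W$ at $\pm 1$ (to force Agmon-type decay at infinity) together with the spectral gap of $L_{0}$ on the $L^{2}$-orthogonal complement of $u_{1}'$ (to transfer the one-dimensional rigidity to the full PDE).
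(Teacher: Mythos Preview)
The paper does not supply its own proof of this lemma; it is quoted verbatim as Corollary~7.5 of Pacard--Ritor\'e and used as a black box in the proof of Proposition~\ref{prop.1}. Your argument is correct and is essentially the one given in that reference: project onto $u_{1}'$ and use Liouville to see the projection is constant, then for the orthogonal remainder establish exponential decay in $t$ and exploit the spectral gap \eqref{spec} to show that $f(y)=\int_{\mathbb{R}}\omega_{\perp}^{2}\,dt$ is a bounded nonnegative subsolution of $-\Delta_{\mathbb{R}^{n-1}}+2\mu_{1}$, hence identically zero.

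One spot where a careful reader might want an extra sentence is the barrier step. The region $\{|t|\geq T_{0}\}\times\mathbb{R}^{n-1}$ is unbounded in the $y$-directions, so the naive comparison of $\omega_{\perp}$ with $Ae^{-\kappa|t|/2}$ does not follow from the classical maximum principle alone: you need either a Phragm\'en--Lindel\"of type maximum principle for $-\Delta+c$ with $c\geq\kappa^{2}>0$ on unbounded domains, or the translation/compactness trick (pick a maximizing sequence for $\omega_{\perp}-Ae^{-\kappa|t|/2}$, translate in $y$ and possibly in $t$, pass to a limit using the uniform elliptic estimates you already invoked, and reach a contradiction at the interior maximum of the limit). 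This is standard and is exactly how Pacard--Ritor\'e justify the decay, so your outline is sound; it would simply be worth flagging why the comparison is legitimate despite the noncompactness in $y$. The same remark applies, with the same fix, to your final step forcing $f\equiv 0$.
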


\subsection{An \textit{a priori} estimate}
Consider on $\mathbb{R} \times \Sigma$ the scaled metric
$$
g_{\varepsilon} \coloneqq \varepsilon^{2}\left( dt^{2} + \mathring{g}  \right).
$$

\begin{dfn}
For all $k \in \mathbb{N},$ $\alpha \in (0,1),$ the space $\mathcal{C}^{k,\alpha}_{\varepsilon}(\mathbb{R}\times \Sigma)$ ($\mathcal{C}^{k,\alpha}_{\varepsilon}(\mathbb{R} \times \partial \Sigma)$) is the space of functions $\omega \in \mathcal{C}^{k,\alpha}(\mathbb{R}\times \Sigma)$ $(\mathcal{C}^{k,\alpha}(\mathbb{R}\times \partial \Sigma))$ where the H\"older norm is computed with respect to $g_{\varepsilon},$ that is
$$
\Vert u \Vert_{\mathcal{C}^{k,\alpha}_{\varepsilon}} = \sum_{j = 0}^{k} \varepsilon^{j} \Vert \nabla^{j} u \Vert_{L^{\infty}} + \varepsilon^{k + \alpha} \sup_{x\neq y} \dfrac{|\nabla^{k}u(x) - \nabla^{k}u(y)|}{\dist(x,y)^{\alpha}}. 
$$
\end{dfn}
Thus, if $\omega \in \mathcal{C}^{k,\alpha}_{\varepsilon}(\mathbb{R}\times \Sigma),$ then
$$
|\nabla^{a} \partial_{t}^{b} \omega(t,y)|_{\mathring{g}} \leq C \leq \Vert \omega \Vert_{\mathcal{C}^{k,\alpha}_{\varepsilon}(\mathbb{R}\times \Sigma)} \varepsilon^{-(a+b)}.
$$
provided $a + b \leq k.$

We shall work in the closed subspace of functions $\omega \coloneqq \mathbb{R} \times \overline{\Sigma} \to \mathbb{R}$ satisfying
\begin{equation} \label{eq.orthogonal}
\int_{-\infty}^{\infty} \omega(t,y) \dot{u}_{\varepsilon}(t)\;dt = 0 \;\;\;\;\forall y \in \overline{\Sigma}.
\end{equation}

\begin{prop} \label{prop.1}
There exist constants $C > 0$ and $\varepsilon_{0} > 0$ such that, for all $\varepsilon \in (0,\varepsilon_{0})$ and for all $\omega \in \mathcal{C}^{2,\alpha}_{\varepsilon}(\mathbb{R} \times \Sigma) \cap \mathcal{C}^{1,\alpha}_{\varepsilon}(\mathbb{R} \times \overline{\Sigma})$ satisfying \eqref{eq.orthogonal}, we have 
$$
\Vert \omega \Vert_{\mathcal{C}^{2,\alpha}_{\varepsilon}(\mathbb{R} \times \Sigma) \cap \mathcal{C}^{1,\alpha}_{\varepsilon}(\mathbb{R} \times \partial \Sigma)} \leq C \left( \Vert L_{\varepsilon} \omega \Vert_{\mathcal{C}^{0,\alpha}_{\varepsilon}(\mathbb{R} \times \Sigma)} +  \Vert P_{\varepsilon} \omega \Vert_{\mathcal{C}^{0,\alpha}_{\varepsilon}(\mathbb{R} \times \partial \Sigma)} \right).
$$
\end{prop}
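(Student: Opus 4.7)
I would prove Proposition \ref{prop.1} by a contradiction/blow-up argument, standard in the Lyapunov-Schmidt literature. Suppose the estimate fails; then there exist sequences $\varepsilon_n \to 0$ and $\omega_n$ satisfying the orthogonality condition \eqref{eq.orthogonal} such that
$$\Vert \omega_n \Vert_{\mathcal{C}^{2,\alpha}_{\varepsilon_n}(\mathbb{R}\times\Sigma) \cap \mathcal{C}^{1,\alpha}_{\varepsilon_n}(\mathbb{R}\times\partial\Sigma)} = 1, \qquad \Vert L_{\varepsilon_n}\omega_n\Vert_{\mathcal{C}^{0,\alpha}_{\varepsilon_n}} + \Vert P_{\varepsilon_n}\omega_n\Vert_{\mathcal{C}^{0,\alpha}_{\varepsilon_n}} \to 0.$$
By the interior and boundary Schauder estimates in the rescaled metric $g_{\varepsilon_n}$, it suffices to prove a contradiction with $\Vert \omega_n \Vert_{L^\infty} \geq c > 0$; indeed, once the sup norm is forced to zero, the Schauder estimates upgrade this to all rescaled H\"older norms going to zero, contradicting the normalization. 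So pick points $(t_n,y_n)\in\mathbb{R}\times\overline{\Sigma}$ at which $|\omega_n|$ is (essentially) attained.

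The next step is to blow up at scale $\varepsilon_n$. Choose normal coordinates on $\Sigma$ centered at $y_n$ (or half-space coordinates if $y_n$ lies close to $\partial\Sigma$), set
$$\tilde\omega_n(s,z) \coloneqq \omega_n(\varepsilon_n s + t_n,\, \exp_{y_n}(\varepsilon_n z)),$$
and split into cases according to the behavior of $(t_n,y_n)$. If $t_n$ stays bounded and $\mathrm{dist}_{\mathring g}(y_n,\partial\Sigma)/\varepsilon_n\to\infty$, then $\tilde\omega_n$ is uniformly bounded in $\mathcal{C}^{2,\alpha}$ on every compact of $\mathbb{R}\times\mathbb{R}^{n-1}$, the coefficients $W''(u_{\varepsilon_n}(\varepsilon_n s + t_n))$ converge smoothly to $W''(u_1(s+t_\infty))$, and the metric converges to the Euclidean one. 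By Arzel\`a--Ascoli a subsequence converges in $\mathcal{C}^{2}_{\mathrm{loc}}$ to a bounded $\tilde\omega_\infty$ with $L_*\tilde\omega_\infty = 0$ on $\mathbb{R}\times\mathbb{R}^{n-1}$. By Lemma \ref{lem.7}, $\tilde\omega_\infty(s,z) = c\, u_1'(s+t_\infty)$. Passing \eqref{eq.orthogonal} to the limit (which is legitimate because the integrand is bounded by the normalization and decays exponentially in $s$) yields $c\int_\mathbb{R}(u_1')^2 = 0$, hence $c = 0$ and $\tilde\omega_\infty \equiv 0$, contradicting $|\tilde\omega_n(0,0)|\geq c>0$.

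If $\mathrm{dist}_{\mathring g}(y_n,\partial\Sigma)/\varepsilon_n$ stays bounded, the same rescaling yields a limit $\tilde\omega_\infty$ bounded on $\mathbb{R}\times\mathbb{R}^{n-1}_+$ satisfying $L_*\tilde\omega_\infty = 0$ together with the Neumann condition $P_*\tilde\omega_\infty = 0$. Since $\sigma''(u_1) = \cos\theta\, W''(u_1)/\sqrt{2W(u_1)}$ is bounded but one-sided, one needs the boundary analogue of Lemma \ref{lem.7}: extend $\tilde\omega_\infty$ by a suitable reflection (or argue directly using the separation of variables coming from the product metric) to reduce again to the full-space classification, concluding that $\tilde\omega_\infty = c\,u_1'(s+t_\infty)$, which is killed by orthogonality as before. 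Finally, if $|t_n|\to\infty$, then in the limit the zeroth-order coefficient converges to $W''(\pm 1) = \gamma_\pm^2 > 0$, so the limiting operator is $-\partial_t^2 - \Delta + \gamma_\pm^2$, which has no nontrivial bounded solution by the maximum principle, and we again get $\tilde\omega_\infty\equiv 0$, a contradiction.

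\textbf{Main obstacle.} The delicate step is the boundary blow-up case: one must verify that the rescaled boundary operator $P_{\varepsilon_n}$ converges, as operators, to $P_*$ (the coefficient $\sigma''(u_{\varepsilon_n})$ behaves well because $u_{\varepsilon_n}\to u_1$ smoothly after rescaling, and the metric flattens) and then invoke or derive a half-space Liouville statement paralleling Lemma \ref{lem.7} with the Robin-type condition. Once that classification is in hand, the orthogonality \eqref{eq.orthogonal} does the rest. All other ingredients --- elliptic regularity in the $g_\varepsilon$-scaled norms, convergence of coefficients, and Arzel\`a--Ascoli --- are standard.
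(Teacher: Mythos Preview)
Your proposal is correct and follows essentially the same blow-up/contradiction strategy as the paper: reduce to an $L^\infty$ estimate via Schauder, rescale by $\varepsilon_n$ around a near-maximum point, split according to whether $\dist_{\mathring g}(y_n,\partial\Sigma)/\varepsilon_n$ stays bounded, and in the boundary case extend the half-space limit by reflection to apply Lemma~\ref{lem.7}, then kill the limit with the orthogonality \eqref{eq.orthogonal}. The only cosmetic differences are that the paper disposes of the case $|t_n|/\varepsilon_n\to\infty$ up front (stating $|t_i|\leq C\varepsilon_i$) rather than as a separate blow-up case, and it asserts the reflection step for the Robin limit without further comment, whereas you (rightly) flag it as the point requiring care.
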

\begin{proof}
By elliptic regularity theory, it is enough to prove that
$$
\Vert \omega \Vert_{L^{\infty}(\mathbb{R} \times \overline{\Sigma})} \leq C \left( \Vert L_{\varepsilon} \omega \Vert_{L^{\infty}(\mathbb{R} \times \Sigma)} +  \Vert P_{\varepsilon} \omega \Vert_{L^{\infty}(\mathbb{R} \times \partial \Sigma)} \right).
$$
The proof is by contradiction. Assume that, for each element of a sequence $\varepsilon_{i}$ tending to $0,$ there exists a function $\omega_{i}$ satisfying \eqref{eq.orthogonal},
$$
\Vert \omega_{i} \Vert_{L^{\infty}(\mathbb{R} \times \overline{\Sigma})} = 1,\;\;\lim_{i \to \infty} \Vert L_{\varepsilon} \omega_{i} \Vert_{L^{\infty}(\mathbb{R} \times \Sigma)} = 0,
$$
and $P_{\varepsilon} \omega_{i} = 0$ on $\mathbb{R} \times \partial \Sigma.$
For each $i \in \mathbb{R},$ we a choose point $(t_{i},y_{i}) \in \mathbb{R} \times \overline{\Sigma}$ with 
$$
|\omega_{i}(t_{i},y_{i}) | \geq \dfrac{1}{2}.
$$
One can show that $|t_{i}| \leq C \varepsilon_{i}$ and so this sequence tend to $0.$
Assume that there exists a constant $\Lambda > 0$ such that for all $y \in \Sigma,$ there is a local chart $Y_{y} \colon B^{n-1}(\Lambda) \to \Sigma$ where $B^{n-1}(\Lambda)$ denotes the ball of radius $\Lambda$ centered at the origin in $\mathbb{R}^{n-1}.$ For $y \in \partial \Sigma,$ suppose $Y_{y} \colon B_{+}^{n-1}(\Lambda) \to \Sigma,$ where $B_{+}^{n-1}(\Lambda)$ is the half ball of radius $\Lambda$ centered at the origin of $\mathbb{R}^{n-1}_{+}.$ 
For each $i,$ consider the function $\Tilde{\omega}_{i}$ defined by
$$
\Tilde{\omega}_{i}(t,y) \coloneqq \omega_{i} (\varepsilon_{i}t, Y_{y_{i}}(\varepsilon_{i} y)),
$$
for all $t \in \mathbb{R},$ all $y \in \mathbb{R}^{n-1}$ if $y_{i} \in \Sigma$ and all $y \in \mathbb{R}_{+}^{n-1}$ if $y_{i} \in \partial \Sigma.$ Also, consider
$$
\rho_{i} = \dfrac{d_{g}(y_{i},\partial \Sigma)}{\varepsilon_{i}},
$$
where $d_{g}$ is the geodesic distance on $\Sigma,$ and we can assume that $\rho_{i}$ converges to $\rho_{\infty} \in [0,\infty].$

If $\rho_{\infty} = \infty,$ using elliptic estimates together with Ascoli's Theorem, we can extract a subsequence and pass to the limit in the equation satisfied by $\Tilde{\omega}_{i}.$ We find that $\Tilde{\omega}_{i}$ converges, uniformly on compacts of $\Sigma$ to $\Tilde{\omega}$ which is a non trivial solution of $L_{*} \Tilde{\omega} = 0$ in $\mathbb{R} \times \mathbb{R}^{n-1}.$ If If $\rho_{\infty} < \infty,$ the same argument implies that $\Tilde{\omega}$ is a weak solution of $L_{*} \Tilde{\omega} = 0$ in $\mathbb{R} \times \mathbb{R}^{n-1}_{+}$ and it satisfies $P_{*} \Tilde{\omega} = 0$ in $\mathbb{R} \times \partial \mathbb{R}^{n-1}_{+}.$ We can extend it to $\mathbb{R} \times \mathbb{R}^{n-1}$ by reflection across the hyperplane $y_{n-1} = 0.$ By Lebesgue's convergence theorem, we can pass to the limit in \eqref{eq.orthogonal}, to get that $\Tilde{\omega}$ satisfies \eqref{eq.orthogt} and $\Tilde{\omega} \in L^{\infty}(\mathbb{R} \times \mathbb{R}^{n-1}).$ This contradicts the result of Lemma \ref{lem.7} and the proof is complete.
\end{proof}

\subsection{Surjectivity}

\begin{prop} \label{prop.2}
There exists $\varepsilon_{0} > 0$ such that, for all $\varepsilon \in (0,\varepsilon_{0})$ and for all $f \in \mathcal{C}^{0,\alpha}_{\varepsilon}(\mathbb{R} \times \Sigma)$ and $h \in \mathcal{C}^{0,\alpha}_{\varepsilon}(\mathbb{R} \times \partial \Sigma)$ satisfying \eqref{eq.orthogonal}, there exists a unique $\omega \in \mathcal{C}^{2,\alpha}_{\varepsilon}(\mathbb{R} \times \Sigma) \cap \mathcal{C}^{1,\alpha}_{\varepsilon}(\mathbb{R} \times \partial \Sigma)$ also satisfying \eqref{eq.orthogonal} and which is a solution of
$$
\left\lbrace
\begin{array}{l}
L_{\varepsilon} \omega  = f,\;\;\text{in}\;\;\mathbb{R} \times \Sigma,\\
P_{\varepsilon} \omega = h,\;\;\text{on}\;\;\mathbb{R} \times \partial \Sigma.
\end{array}
\right.
$$
\end{prop}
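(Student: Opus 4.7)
Uniqueness is immediate from the a priori estimate of Proposition \ref{prop.1}: if $\omega$ solves the homogeneous problem and lies in the orthogonal space \eqref{eq.orthogonal}, then $\|\omega\|_{\mathcal{C}^{2,\alpha}_{\varepsilon}\cap \mathcal{C}^{1,\alpha}_{\varepsilon}} = 0$. For existence, my strategy is to first produce a weak solution in a Hilbert space setting, then approximate on truncated cylinders to handle non-compactness, and finally upgrade regularity via Schauder theory.

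Concretely, I would work on the constrained Hilbert space
$$
\mathcal{H}_{\varepsilon} := \left\{ \omega \in H^{1}(\mathbb{R}\times \Sigma, g_{\varepsilon}) \, : \, \int_{\mathbb{R}} \omega(t,y)\, \dot{u}_{\varepsilon}(t)\, dt = 0 \text{ for a.e. } y \in \Sigma \right\},
$$
and the bilinear form associated with $(L_{\varepsilon}, P_{\varepsilon})$ after integration by parts,
$$
a_{\varepsilon}(\omega, \phi) := \int_{\mathbb{R}\times \Sigma} \bigl(\varepsilon^{2} \langle \nabla \omega, \nabla \phi \rangle + W''(u_{\varepsilon})\omega \phi\bigr) + \int_{\mathbb{R}\times \partial \Sigma} \varepsilon\, \sigma''(u_{\varepsilon})\, \omega \phi.
$$
After rescaling $\tau = t/\varepsilon$, the one-dimensional spectral gap \eqref{spec} applied slicewise in $y$ controls the interior quadratic form on $\mathcal{H}_{\varepsilon}$, giving a lower bound of the form $\mu_{1} \|\omega\|_{L^{2}}^{2} + \varepsilon^{2}\|\nabla_{\Sigma}\omega\|_{L^{2}}^{2}$ after accounting for the weighted volume. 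The boundary term is handled via a standard trace inequality on $\Sigma$, applied fiberwise in $\tau$, so that it can be absorbed into the positive part once $\varepsilon$ is small enough. This yields coercivity of $a_{\varepsilon}$ on $\mathcal{H}_{\varepsilon}$, and the orthogonality of $f$ and $h$ ensures that the right-hand-side functional $\phi \mapsto \int f\phi + \int h\phi$ is well defined on $\mathcal{H}_{\varepsilon}$. Lax–Milgram then delivers a unique weak solution $\omega \in \mathcal{H}_{\varepsilon}$.

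To deal with the fact that $\mathbb{R}\times \Sigma$ is non-compact and the integrals in $\mathcal{H}_{\varepsilon}$ may not a priori converge, I would carry out the Lax–Milgram step first on truncated cylinders $(-R, R) \times \Sigma$, with Dirichlet condition at $t = \pm R$ and with the orthogonality integral taken over $(-R, R)$. This produces solutions $\omega_{R}$. The a priori estimate of Proposition \ref{prop.1}, whose proof by contradiction is insensitive to truncation, applies uniformly in $R$, so $\{\omega_{R}\}$ is bounded in $\mathcal{C}^{2,\alpha}_{\varepsilon}$ on compact subsets. The exponential decay of $\dot{u}_{\varepsilon}$ and the fact that $W''(u_{\varepsilon}) \to W''(\pm 1) = \gamma_{\pm}^{2} > 0$ outside $[-C\varepsilon, C\varepsilon]$ provide exponential decay of $\omega_{R}$ as $|t|\to\infty$, uniform in $R$. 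Extracting a convergent subsequence via Arzelà–Ascoli produces a global solution $\omega$ satisfying \eqref{eq.orthogonal} on all of $\mathbb{R}\times\Sigma$. Finally, interior and boundary Schauder estimates for the uniformly elliptic operators $L_{\varepsilon}$ and $P_{\varepsilon}$ (with coefficients smooth and bounded, depending on $u_{\varepsilon}$) upgrade $\omega$ to $\mathcal{C}^{2,\alpha}_{\varepsilon}(\mathbb{R}\times\Sigma) \cap \mathcal{C}^{1,\alpha}_{\varepsilon}(\mathbb{R}\times\partial\Sigma)$ with the claimed $\varepsilon$-weighted norm.

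\textbf{Main obstacle.} The delicate point is the verification of coercivity of $a_{\varepsilon}$ on $\mathcal{H}_{\varepsilon}$. The interior integrand $W''(u_{\varepsilon})$ is negative in the transition region, so positivity relies entirely on the slicewise orthogonality and the spectral gap \eqref{spec}; moreover the boundary contribution $\varepsilon\, \sigma''(u_{\varepsilon})\,\omega^{2}$ has no definite sign and must be absorbed using a trace inequality applied fiberwise, uniformly in the small parameter $\varepsilon$. Coordinating this absorption with the rescaling $\tau = t/\varepsilon$ (so that the trace constants do not blow up) is the main analytic difficulty, after which the rest of the proof follows the standard Fredholm pattern.
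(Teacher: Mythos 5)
Your proposal follows essentially the same variational / Lax--Milgram route that the paper takes: pose the weak formulation on the fiberwise-orthogonal complement of $\dot{u}_{\varepsilon}$, use the one-dimensional spectral gap \eqref{spec} for coercivity, invoke Lax--Milgram, then upgrade via elliptic/Schauder theory. You add truncation-and-decay scaffolding that the paper omits, and you correctly keep the factor $\varepsilon$ in front of the boundary integrals coming from integrating $\varepsilon^{2}\Delta_{\mathring g}$ by parts against $P_{\varepsilon} = -\varepsilon\partial_{\nu} - \sigma''(u_{\varepsilon})$; this factor appears to have been dropped in the paper's displayed weak form. So your method is the same; the extra care is reasonable but not a different route.

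The one place to watch is the step you yourself call the main difficulty. Your claim that the indefinite boundary term $\int_{\mathbb{R}\times\partial\Sigma}\varepsilon\,\sigma''(u_{\varepsilon})\,\omega^{2}$ can be absorbed ``once $\varepsilon$ is small enough'' by a fiberwise trace inequality on $\Sigma$ does not actually close on $\varepsilon$-smallness alone. Interpolating $\int_{\partial\Sigma}\omega^{2}\leq\delta\int_{\Sigma}|\nabla_{\Sigma}\omega|^{2}+C_{\Sigma}\delta^{-1}\int_{\Sigma}\omega^{2}$ fiberwise and demanding both $\varepsilon\|\sigma''\|_{\infty}\delta\leq\varepsilon^{2}$ (absorption into $\varepsilon^{2}\int|\nabla_{\Sigma}\omega|^{2}$) and $\varepsilon\|\sigma''\|_{\infty}C_{\Sigma}\delta^{-1}\leq\tfrac{1}{2}\mu_{1}$ (absorption into the spectral-gap term) pins $\delta$ between $\varepsilon\|\sigma''\|_{\infty}C_{\Sigma}/\mu_{1}$ and $\varepsilon/\|\sigma''\|_{\infty}$; the $\varepsilon$'s cancel and you are left with the $\varepsilon$-independent constraint $\|\sigma''\|_{\infty}^{2}C_{\Sigma}\lesssim\mu_{1}$. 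Since $\|\sigma''\|_{\infty}$ is proportional to $\cos\theta$, this is a smallness condition on $\cos\theta$, not on $\varepsilon$, and a general $\theta\in(0,\pi/2]$ need not satisfy it. To be fair, the paper's own proof does not supply a sharper argument here --- it simply writes $F(\omega)\geq c_{1}\int\omega^{2}+c_{2}\int_{\partial}\omega^{2}$ ``by \eqref{spec}'' with no separate treatment of the boundary form --- so this is a shared omission rather than an error unique to your write-up. If you want to close it cleanly, two options: (i) run the Fredholm alternative on your truncated cylinders: $a_{\varepsilon}+k\langle\cdot,\cdot\rangle$ is coercive for a fixed large $k$ (boundedness of $W''$, $\sigma''$ plus a crude trace bound), the resolvent is compact on bounded cylinders, and Proposition~\ref{prop.1} gives the uniform a priori bound and trivial kernel, so no fine coercivity of $a_{\varepsilon}$ is required; or (ii) exploit the structural identity $\sigma''(u_{1})=\cos\theta\,u_{1}''/u_{1}'$ together with $\cos\theta<1$ to estimate the boundary form directly against the interior form on the orthogonal complement, which is where the hypothesis $\theta>0$ (the non-wetting condition) must actually be used.
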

\begin{proof}
Note that we have the weak formulation
\begin{equation*}
\begin{split}
&\int_{\mathbb{R} \times \Sigma} \varepsilon^{2} \left(\partial_{t}\omega \;\partial_{t}\phi + (\nabla \omega,\nabla \phi)_{\mathring{g}} \right) + W''(u_{\varepsilon}) \omega \phi\;dt\;dv_{\mathring{g}} + \int_{\mathbb{R} \times \partial \Sigma} \sigma''(u_{\varepsilon}) \omega \phi\;dt\;da_{\mathring{g}} \\
&\quad \quad= \int_{\mathbb{R} \times \Sigma} f\phi\;dt\;dv_{\mathring{g}}-\int_{\mathbb{R} \times \partial \Sigma} h \phi\;dt\;da_{\mathring{g}},\;\;\;\;\forall \phi \in H^{1}(\mathbb{R} \times \Sigma).
\end{split}
\end{equation*}
Consider
$$
F(\omega) \coloneqq \int_{\mathbb{R} \times \Sigma} \varepsilon^{2} \left(|\partial_{t} \omega|^{2} + |\nabla \omega|^{2}_{\mathring{g}} \right) + W''(u_{\varepsilon}) \omega^{2}\;dt\;dv_{\mathring{g}}  + \int_{\mathbb{R} \times \partial \Sigma} \sigma''(u_{\varepsilon}) \omega^{2}\;dt\;da_{\mathring{g}},
$$
acting on the space of functions $H^{1}(\mathbb{R} \times \Sigma)$ which satisfy \eqref{eq.orthogonal} for a.e. $y \in \Sigma.$

By \eqref{spec}, we have
$$
F(\omega) \geq c_{1} \int_{\mathbb{R}\times \Sigma}\omega^{2} + c_{2} \int_{\mathbb{R}\times \partial \Sigma}\omega^{2}.
$$
Thus, the existence and uniqueness of a weak solution of $L_{\varepsilon} \omega = f$ in $H^{1}(\mathbb{R} \times \Sigma)$ satisfying $P_{\varepsilon} \omega = h $ follow by the Lax-Milgram's Theorem and by elliptic regularity, the proof is concluded.
\end{proof}

\section{The subproblem on $\overline{M}$} \label{sec.nlin}
Consider
\begin{equation} \label{eq.nlinearops}
\mathcal{L}_{\varepsilon} \coloneqq -\varepsilon^{2} \Delta_{g} - \Gamma,\;\; \mathcal{P}_{\varepsilon} \coloneqq -\varepsilon \left\langle \nabla , \nu_{\partial M} \right\rangle + \Tilde{\Gamma},
\end{equation}
where $\Gamma$ is an interpolation between $W''(-1)$ in $M^{-}(\Sigma)$ and $W''(1)$ in  $M^{+}(\Sigma)$ and analogously on $\partial M,$ $\Tilde{\Gamma}$ is an interpolation between $\sigma''(-1)$ and $\sigma''(1).$ Precisely, let $\xi$ be a nonnegative smooth cutoff function satisfying
$$
\xi(t) = 
\left\lbrace
\begin{array}{l}
1,\quad \text{in}\;(1,\infty),\\
0,\quad \text{in}\;(-\infty,-1),
\end{array}
\right.
$$
and in twisted Fermi coordinates with respect to $\Sigma,$ define respectively
$$
\Gamma(t,y) \coloneqq \left(1 - \xi\left(\frac{t}{\varepsilon}\right) \right) W''(-1) + \xi\left(\frac{t}{\varepsilon}\right) W''(1),
$$
$$
\Tilde{\Gamma}(t,y) \coloneqq \left(1 - \xi\left(\frac{t}{\varepsilon}\right) \right) \sigma''(-1) + \xi\left(\frac{t}{\varepsilon}\right) \sigma''(1),
$$
in $V_{\tau_{0}}(\Sigma)$ and
$$
\Gamma \coloneqq W''(\pm1)\;\;\text{and}\;\;\Tilde{\Gamma} \coloneqq \sigma''(\pm1)\;\;\text{in}\; M^{\pm}(\Sigma) \setminus V_{\tau_{0}}(\Sigma),
$$
Note that here we are and will be abusing the notation $\sigma''(\pm 1) \coloneqq \lim_{u \to \pm1^{\mp}} \frac{W'(u)}{\sqrt{W(u)}}.$

\begin{prop} \label{prop.3}
There exists a constant $C > 0$ such that
$$
\Vert \omega \Vert_{\mathcal{C}^{2,\alpha}_{\varepsilon}(M) \cap \mathcal{C}^{1,\alpha}_{\varepsilon}(\overline{M})} \leq C \left( \Vert \mathcal{L}_{\varepsilon} \omega \Vert_{\mathcal{C}^{0,\alpha}_{\varepsilon}(M)} +  \Vert \mathcal{P}_{\varepsilon} \omega \Vert_{\mathcal{C}^{0,\alpha}_{\varepsilon}(\partial M)} \right),
$$
where $\mathcal{C}^{k,\alpha}_{\varepsilon}(M)$ ($\mathcal{C}^{k,\alpha}_{\varepsilon}(\partial M)$) is the H\"older space of functions defined on $M$ ($\partial M$) where the norms of the derivatives and H\"older  derivatives are performed using the scaled metric $\varepsilon^{2} g.$ 
\end{prop}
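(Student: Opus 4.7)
The plan is to run the contradiction argument of Proposition \ref{prop.1} globally on $\overline M$. Standard interior and boundary Schauder estimates applied on geodesic balls of radius of order $\varepsilon$ (with respect to the scaled metric $\varepsilon^{-2} g$) reduce the $\mathcal{C}^{2,\alpha}_\varepsilon \cap \mathcal{C}^{1,\alpha}_\varepsilon$ claim to the $L^\infty$ bound
\begin{equation*}
\|\omega\|_{L^\infty(\overline M)} \leq C \bigl(\|\mathcal{L}_\varepsilon \omega\|_{L^\infty(M)} + \|\mathcal{P}_\varepsilon \omega\|_{L^\infty(\partial M)}\bigr).
\end{equation*}
Assume this fails: there exist sequences $\varepsilon_i \downarrow 0$ and $\omega_i$ with $\|\omega_i\|_\infty = 1$ while the right-hand side tends to $0$. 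Pick $x_i \in \overline M$ with $|\omega_i(x_i)| \geq 1/2$ and rescale $\tilde\omega_i(x) := \omega_i(\exp_{x_i}(\varepsilon_i x))$ in a suitable chart (a full chart in the interior, a half-chart adapted to $\partial M$ near the boundary).

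The asymptotic behavior of $\dist_g(x_i,\partial M)/\varepsilon_i$ and $\dist_g(x_i,\Sigma_0)/\varepsilon_i$ splits the analysis into cases; in each, uniform elliptic regularity and Arzelà--Ascoli yield a bounded subsequential limit $\tilde\omega_\infty$, with $|\tilde\omega_\infty(0)| \geq 1/2$, satisfying
\begin{equation*}
-\Delta \tilde\omega_\infty + \Gamma_\infty \tilde\omega_\infty = 0
\end{equation*}
on $\mathbb{R}^n$ or $\mathbb{R}^n_+$, together with the Robin condition $-\partial_\nu \tilde\omega_\infty + \tilde\Gamma_\infty \tilde\omega_\infty = 0$ on $\partial\mathbb{R}^n_+$ in the boundary cases. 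The limiting potential $\Gamma_\infty$ is either the constant $\gamma_\pm^2 = W''(\pm 1)$ or the one-dimensional interpolation $(1-\xi(s))\gamma_-^2 + \xi(s)\gamma_+^2$ in the transverse coordinate $s$; in every case it is bounded below by $\min(\gamma_-^2,\gamma_+^2) > 0$. On $\mathbb{R}^n$, and on the $\mathbb{R}^n_+$ boundary case sitting over $M^-$ (where $\tilde\Gamma_\infty = \cos\theta\,\gamma_- \geq 0$), the sliding-maximum argument of Proposition \ref{prop.1} (translate to realize $\sup \tilde\omega_\infty$ as the value at the origin of a further subsequential limit, then apply the strong maximum principle using $\Gamma_\infty > 0$, and analogously for the infimum) forces $\tilde\omega_\infty \equiv 0$, contradicting $|\tilde\omega_\infty(0)| \geq 1/2$.

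The main obstacle is the $\mathbb{R}^n_+$ case sitting over $M^+$, where $\tilde\Gamma_\infty = \sigma''(1) = -\cos\theta\,\gamma_+$ has the \emph{wrong} sign and the maximum-principle argument is not automatic. Here I would obtain the Liouville step from the coercivity of the bilinear form
\begin{equation*}
Q(v) := \int_{\mathbb{R}^n_+} \bigl(|\nabla v|^2 + \Gamma_\infty v^2\bigr) + \int_{\partial \mathbb{R}^n_+} \tilde\Gamma_\infty v^2,
\end{equation*}
for $v \in C^1_c(\overline{\mathbb{R}^n_+})$: the fundamental-theorem-of-calculus trace estimate $\int_{\partial\mathbb{R}^n_+} v^2 \leq \lambda\int |\partial_n v|^2 + \lambda^{-1} \int v^2$ combined with the choice $\lambda = 1/\gamma_+$ yields
\begin{equation*}
Q(v) \geq (1-\cos\theta)\Bigl(\int_{\mathbb{R}^n_+} |\nabla v|^2 + \gamma_+^2 \int_{\mathbb{R}^n_+} v^2\Bigr),
\end{equation*}
which is strictly positive precisely because the standing assumption $\theta \in (0,\pi/2]$ gives $\cos\theta < 1$, i.e.\ avoids the perfect-wetting regime. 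Coercivity rules out nonzero bounded solutions of the limiting Robin problem, closing the last case and completing the a priori estimate. I expect all other difficulties (verifying convergence of $\Gamma$ and $\tilde\Gamma$ in the blow-up, handling the mild interaction near $\partial\Sigma_0$ where the transverse direction is tilted with angle $\theta$ relative to the limiting $\partial \mathbb{R}^n_+$) to be routine once this coercivity is in hand.
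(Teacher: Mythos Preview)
Your approach is correct and considerably more detailed than the paper's. The paper dispatches this proposition in a single sentence, invoking the Schauder theory for oblique-derivative problems (Theorems~6.30 and~6.31 of Gilbarg--Trudinger, together with Lieberman) without further comment. After rescaling to unit balls in the metric $\varepsilon^{-2}g$, those results certainly give the $\mathcal{C}^{2,\alpha}_\varepsilon$ bound in terms of the data \emph{plus} $\|\omega\|_{L^\infty}$, but they do not by themselves explain why the $L^\infty$ term can be dropped uniformly in $\varepsilon$; that is precisely the content of the blow-up/Liouville step you carry out.

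You have also put your finger on the genuine new wrinkle compared with the free-boundary case ($\theta=\pi/2$) of Pacard--Ritor\'e: on the $M^+$ side one has $\tilde\Gamma = \sigma''(1) = -\cos\theta\,\gamma_+ < 0$, so the Robin coefficient has the wrong sign for a direct Hopf-lemma/maximum-principle argument, and the paper's bare citation does not address this. Your coercivity computation via the elementary trace inequality with $\lambda = 1/\gamma_+$ is exactly the right fix, and it makes transparent where the standing hypothesis $\theta \in (0,\pi/2]$ (hence $\cos\theta < 1$, avoiding perfect wetting) enters the linear analysis. One small remark: to pass from the coercivity of $Q$ on $C^1_c(\overline{\mathbb{R}^n_+})$ to the vanishing of a merely \emph{bounded} solution $\tilde\omega_\infty$, you should insert the standard cut-off step (test against $\chi_R^2\,\tilde\omega_\infty$ and send $R\to\infty$); this is routine once the coercivity is in hand.
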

\begin{proof}
This follows from Schauder estimate for solutions $u \in \mathcal{C}^{2,\alpha}_{\varepsilon}(M) \cap \mathcal{C}^{1,\alpha}_{\varepsilon}(\overline{M})$ of the problem
$$
\left\lbrace
\begin{array}{l}
\mathcal{L}_{\varepsilon} \omega  = f,\;\;\text{in}\;\;M,\\
\mathcal{P}_{\varepsilon} \omega = h,\;\;\text{on}\;\;\partial M,
\end{array}
\right.
$$
for any $f \in C^{0,\alpha}_{\varepsilon}(M)$ and $h \in  C^{0,\alpha}_{\varepsilon}(\partial M),$ see \cite{gilbarg-trudinger} (Theorems $6.30$ and $6.31$) and \cite{lieberman}.
\end{proof}

\section{An equivalent formulation} \label{sec.equiv}

In order to define an approximate solution and perturb it appropriately so we can achieve a full solution to \eqref{eq.AC}, we consider the strategy in \cite{pacard}.  
\begin{enumerate}
\item For $j = 1,\cdots,5,$ define $\chi_{j}$ to be even non-decreasing cut-off functions satisfying
$$
Y^{*} \chi_{j}(t,y) \coloneqq
\left\lbrace
\begin{array}{lll}
1&\text{when}&|t| \leq \varepsilon^{\delta_{*}} \left( 1 - \frac{2j - 1}{100} \right),\\
0&\text{when}&|t| \geq \varepsilon^{\delta_{*}} \left( 1 - \frac{2j - 2}{100} \right),
\end{array}
\right.
$$ 
for some fixed $\delta_{*} \in (0,1).$  We denote by $\Omega_{j}$ the support of $\chi_{j}.$ Note that $\Omega_{j + 1} \subset \Omega_{j}$ and that $Y$ is a diffeomorphism from the set $\Sigma \times (-2\varepsilon^{\delta_{*}},2\varepsilon^{\delta_{*}})$ onto its image.
\item Define an approximate solution by
$$
\Tilde{u}_{\varepsilon} \coloneqq \chi_{1} \overline{u}_{\varepsilon} \pm (1 - \chi_{1}),
$$
where $\pm$ corresponds to whether the point belongs to $M^{\pm}(\Sigma).$ Here
$$
Y^{*} \overline{u}_{\varepsilon}(t,y) \coloneqq u_{\varepsilon}(t).
$$
Since $\overline{u}_{\varepsilon}$ is exponentially close to $\pm 1$ at infinity, it is reasonable to consider it as $\pm 1 $ away from $\Gamma.$
\item Given a function $\zeta \in \mathcal{C}^{2,\alpha}(\Sigma) \cap \mathcal{C}^{1,\alpha}(\overline{\Sigma}),$ define a diffeomorphism $D_{\zeta}$ of $M$ by
$$
\left\lbrace
\begin{array}{l}
Y^{*} D_{\zeta}(t,y) \coloneqq Y(t - \chi_{2}(t,y)\zeta(y),y)\;\;\text{in}\;\;\Omega_{2},\\
D_{\zeta} = I\;\;\text{in}\;\;M \setminus \Omega_{1}.
\end{array}
\right.
$$
The inverse of $D_{\zeta}$ can be written as
$$
Y^{*} D^{-1}_{\zeta}(t,y) = Y(t + \chi_{2}(t,y) \zeta(y) + \xi(t,y,\zeta(y)) \zeta(y)^{2},y)\;\;\text{in}\;\;\Omega_{2},
$$
where $(t,y,z) \mapsto \xi(t,y,z)$ is a smooth function defined for $z$ small.
\end{enumerate}

Now, given a function $\zeta \in \mathcal{C}^{2,\alpha}(\Sigma) \cap \mathcal{C}^{1,\alpha}(\overline{\Sigma})$ small enough, we use the diffeomorphism $D_{\zeta}$ and we consider $u = \overline{u} \circ D_{\zeta}$ so that the equation \eqref{eq.AC} can be rewritten as
$$
\left\lbrace
\begin{array}{l}
-\varepsilon^{2} \Delta_{g}(\overline{u} \circ D_{\zeta}) \circ D_{\zeta}^{-1} + W'(\overline{u}) = \varepsilon \lambda,\;\;\text{in}\;\;M,\\\\
-\varepsilon \left\langle \nabla \left(\overline{u} \circ D_{\zeta}\right) \circ D_{\zeta}^{-1}, \nu_{\partial M} \right\rangle = \sigma'(\overline{u}) \;\;\text{on}\;\partial M,\\\\
\int_{M} \overline{u}\;dv_{g} = c_{0}|M|,
\end{array}
\right.
$$ 

We look for a solution 
$$
\overline{u} \coloneqq \Tilde{u}_{\varepsilon} + v,
$$
so that \eqref{eq.AC} can be written as
$$
\left\lbrace
\begin{array}{l}
-\varepsilon^{2} \Delta_{g} (v \circ D_{\zeta}) \circ D_{\zeta}^{-1} + W''(\Tilde{u}_{\varepsilon}) v + E_{\varepsilon}(\zeta) +Q_{\varepsilon}(v) = \varepsilon \lambda,\;\;\text{in}\;\;M,\\\\
-\varepsilon \left\langle \nabla \left(v \circ D_{\zeta}\right) \circ D_{\zeta}^{-1}, \nu_{\partial M} \right\rangle - \sigma''(\Tilde{u}_{\varepsilon})  v - \Tilde{E}_{\varepsilon}(\zeta) - \Tilde{Q}_{\varepsilon}(v) = 0, \;\;\text{on}\;\partial M,\\\\
\int_{M} \Tilde{u}_{\varepsilon}\;dv_{g} = c_{0}|M| - \int_{M} \left(\chi_{4} v^{\sharp} + v^{\flat} \right)\;dv_{g},
\end{array}
\right.
$$
where
$$
E_{\varepsilon}(\zeta) \coloneqq -\varepsilon^{2} \Delta_{g} (\Tilde{u}_{\varepsilon} \circ D_{\zeta}) \circ D_{\zeta}^{-1} + W'(\Tilde{u}_{\varepsilon}),\;\;\;\Tilde{E}_{\varepsilon}(\zeta) \coloneqq \varepsilon \left\langle \nabla (\Tilde{u}_{\varepsilon} \circ D_{\zeta}) \circ D_{\zeta}^{-1}, \nu_{\partial M} \right\rangle + \sigma'(\Tilde{u}_{\varepsilon}),
$$
and
$$
Q_{\varepsilon}(v) \coloneqq  W'(\Tilde{u}_{\varepsilon} + v) - W'(\Tilde{u}_{\varepsilon} ) - W''(\Tilde{u}_{\varepsilon})v ,\;\;\Tilde{Q}_{\varepsilon}(v) \coloneqq \sigma'(\Tilde{u}_{\varepsilon} + v) - \sigma'(\Tilde{u}_{\varepsilon} ) - \sigma''(\Tilde{u}_{\varepsilon})v .
$$

We set
$$
v \coloneqq \chi_{4} v^{\sharp} + v^{\flat},
$$
with the function $v^{\flat}$ satisfying
$$
\left\lbrace
\begin{array}{l}
\mathcal{L}_{\varepsilon} v^{\flat} = N_{\varepsilon}(v^{\flat},v^{\sharp},\zeta),\\
\mathcal{P}_{\varepsilon} v^{\flat} = \Tilde{N}_{\varepsilon}(v^{\flat},v^{\sharp},\zeta),
\end{array}
\right.
$$
where
\begin{equation*}
\begin{split}
N_{\varepsilon}(v^{\flat},v^{\sharp},\zeta) \coloneqq (\chi_{4} - 1) &\left[ \varepsilon^{2} \left(\Delta_{g} (v^{\flat} \circ D_{\zeta}) \circ D_{\zeta}^{-1} - \Delta_{g} v^{\flat} \right) - \left(W''(\Tilde{u}_{\varepsilon}) -  \Gamma \right)v^{\flat} - E_{\varepsilon}(\zeta) + \varepsilon \lambda\right.\\
& \;\;\;\left. - Q_{\varepsilon}(\chi_{4} v^{\sharp} + v^{\flat}) \right] - \varepsilon^{2} \left( \Delta_{g} (\chi_{4} v^{\sharp} \circ D_{\zeta}) - \chi_{4} \Delta_{g} ( v^{\sharp} \circ D_{\zeta}) \right) \circ D_{\zeta}^{-1},
\end{split}
\end{equation*}
and
\begin{equation*}
\begin{split}
\Tilde{N}_{\varepsilon}(v^{\flat},v^{\sharp},\zeta) \coloneqq (\chi_{4} - 1) &\left[ -\varepsilon \left( \left\langle \nabla (v^{\flat} \circ D_{\zeta}) \circ D_{\zeta}^{-1}, \nu_{\partial M} \right\rangle -  \left\langle \nabla v^{\flat}, \nu_{\partial M} \right\rangle \right) - \left( \sigma''(\Tilde{u}_{\varepsilon}) -  \Tilde{\Gamma} \right) v^{\flat} - \Tilde{E}_{\varepsilon}(\zeta) \right.\\
& \;\;\;\left. - \Tilde{Q}_{\varepsilon}(\chi_{4} v^{\sharp} + v^{\flat}) \right] - \varepsilon \left\langle \left( \nabla (\chi_{4} v^{\sharp} \circ D_{\zeta}) - \chi_{4} \nabla (v^{\sharp} \circ D_{\zeta}) \right) \circ D_{\zeta}^{-1}, \nu_{\partial M} \right\rangle.
\end{split}
\end{equation*}

\begin{rmk}
From Proposition \ref{prop.3}, if
$$
\left\lbrace
\begin{array}{l}
\mathcal{L}_{\varepsilon} \omega  = f,\;\;\text{in}\;\;M,\\
\mathcal{P}_{\varepsilon} \omega = h,\;\;\text{on}\;\;\partial M,
\end{array}
\right.
$$
then
\begin{equation} \label{eq.est}
\Vert \omega \Vert_{\mathcal{C}^{2,\alpha}_{\varepsilon}(M) \cap \mathcal{C}^{1,\alpha}_{\varepsilon}(\overline{M})} \leq C \left( \Vert f \Vert_{\mathcal{C}^{0,\alpha}_{\varepsilon}(M)} +  \Vert h  \Vert_{\mathcal{C}^{0,\alpha}_{\varepsilon}(\partial M)} \right).
\end{equation}
In the case where $f \equiv 0$ and $h \equiv 0$ in $\Omega_{4},$ we can show that the estimate for $\omega$ can be improved in $\Omega_{5}$ as
$$
\Vert \chi_{5} \omega \Vert_{\mathcal{C}^{2,\alpha}_{\varepsilon}(M) \cap \mathcal{C}^{1,\alpha}_{\varepsilon}(\overline{M})} \leq C \varepsilon \left( \Vert f \Vert_{\mathcal{C}^{0,\alpha}_{\varepsilon}(M)} +  \Vert h  \Vert_{\mathcal{C}^{0,\alpha}_{\varepsilon}(\partial M)} \right),
$$
provided that $\varepsilon$ is small enough. So, if $f \equiv 0$ and $h \equiv 0$ in $\Omega_{4},$ then we can improve \eqref{eq.est} into
\begin{equation} \label{eq.est1}
\Vert \omega \Vert_{\Tilde{\mathcal{C}}^{2,\alpha}_{\varepsilon}(M) \cap \Tilde{\mathcal{C}}^{1,\alpha}_{\varepsilon}(\overline{M})} \leq C \left( \Vert f \Vert_{\mathcal{C}^{0,\alpha}_{\varepsilon}(M)} +  \Vert h \Vert_{\mathcal{C}^{0,\alpha}_{\varepsilon}(\partial M)} \right),
\end{equation}
where, by definition
$$
\Vert v \Vert_{\Tilde{\mathcal{C}}^{2,\alpha}_{\varepsilon}(M) \cap \Tilde{\mathcal{C}}^{1,\alpha}_{\varepsilon}(\overline{M})} \coloneqq \varepsilon^{-2} \Vert \chi_{5} v \Vert_{\mathcal{C}^{2,\alpha}_{\varepsilon}(M) \cap \mathcal{C}^{1,\alpha}_{\varepsilon}(\overline{M})} + \Vert v \Vert_{\mathcal{C}^{2,\alpha}_{\varepsilon}(M) \cap \mathcal{C}^{1,\alpha}_{\varepsilon}(\overline{M})}
$$
\end{rmk}

Taking the difference between the equation satisfied by $v$ and the equation satisfied by $v^{\flat},$ we find that it is enough that $v^{\sharp}$ solves, in the support of $\chi_{4},$
\begin{equation*}
\left\lbrace
\begin{split}
\varepsilon^{2} \Delta_{g} (v^{\sharp} \circ D_{\zeta}) \circ D_{\zeta}^{-1} - W''(\Tilde{u}_{\varepsilon}) v^{\sharp} + \varepsilon \lambda &= E_{\varepsilon}(\zeta) + Q_{\varepsilon}(\chi_{4} v^{\sharp} + v^{\flat}) + \left( W''(\Tilde{u}_{\varepsilon}) -  \Gamma \right)v^{\flat} \\
&\;\;\;\;- \varepsilon^{2} \left( \Delta_{g} (v^{\flat} \circ D_{\zeta}) \circ D_{\zeta}^{-1} - \Delta_{g} v^{\flat} \right)\;\;\text{in}\;\;M,\\
\varepsilon \left\langle \nabla (v^{\sharp} \circ D_{\zeta}) \circ D_{\zeta}^{-1}, \nu_{\partial M} \right\rangle + \sigma''(\Tilde{u}_{\varepsilon}) v^{\sharp} &= - \varepsilon \left( \left\langle \nabla (v^{\flat} \circ D_{\zeta}) \circ D_{\zeta}^{-1},\nu_{\partial M} \right\rangle - \left\langle \nabla v^{\flat}, \nu_{\partial M} \right\rangle \right)\\
&\;\;\;\; + \Tilde{E}_{\varepsilon}(\zeta) + \Tilde{Q}_{\varepsilon}(\chi_{4} v^{\sharp} + v^{\flat}) - \left(\sigma''(\Tilde{u}_{\varepsilon}) - \Tilde{\Gamma} \right) v^{\flat}  \;\;\text{on}\;\;\partial M.
\end{split}
\right.
\end{equation*}

We can as well solve
\begin{equation} \label{eq.equiv}
\left\lbrace
\begin{split}
L_{\varepsilon} v^{\sharp} - \varepsilon J_{\Sigma} \zeta\; \dot{u}_{\varepsilon} &= M_{\varepsilon}(v^{\flat},v^{\sharp},\zeta),\;\;\text{in}\;\;M,\\
P_{\varepsilon} v^{\sharp} - \varepsilon B_{\Sigma} \zeta\; \dot{u}_{\varepsilon} &= V_{\varepsilon}(v^{\flat},v^{\sharp},\zeta),\;\;\text{on}\;\;\partial M,
\end{split}
\right.
\end{equation}
where
\begin{equation*}
\left\lbrace
\begin{split}
M_{\varepsilon}(v^{\flat},v^{\sharp},\zeta) \coloneqq \chi_{3}& \left[L_{\varepsilon} v^{\sharp} - \varepsilon^{2} \Delta_{g} (v^{\sharp} \circ D_{\zeta}) \circ D_{\zeta}^{-1} + W''(\Tilde{u}_{\varepsilon}) v^{\sharp} - \varepsilon \lambda + E_{\varepsilon}(\zeta) + Q_{\varepsilon}(\chi_{4} v^{\sharp} + v^{\flat})\right.\\
&\left.\;\;\;- \varepsilon J_{\Sigma} \zeta\; \dot{u}_{\varepsilon} - \varepsilon^{2} \left(\Delta_{g} (v^{\flat} \circ D_{\zeta}) \circ D_{\zeta}^{-1}  - \Delta_{g} v^{\flat}\right) + \left( W''(\Tilde{u}_{\varepsilon}) - \Gamma \right) v^{\flat} \right],\\
V_{\varepsilon}(v^{\flat},v^{\sharp},\zeta) \coloneqq \chi_{3}& \left[P_{\varepsilon} v^{\sharp} - \varepsilon \left\langle \nabla (v^{\sharp} \circ D_{\zeta}) \circ D_{\zeta}^{-1},\nu_{\partial M} \right\rangle - \sigma''(\Tilde{u}_{\varepsilon}) v^{\sharp} + \Tilde{E}_{\varepsilon}(\zeta) + \Tilde{Q}_{\varepsilon}(\chi_{4} v^{\sharp} + v^{\flat})\right.\\
&\left.\;\;\;- \varepsilon B_{\Sigma} \zeta\; \dot{u}_{\varepsilon} - \varepsilon \left( \left\langle \nabla (v^{\flat} \circ D_{\zeta}) \circ D_{\zeta}^{-1}, \nu_{\partial M} \right\rangle - \left\langle \nabla v^{\flat} ,\nu_{\partial M} \right\rangle \right) - \left( \sigma''(\Tilde{u}_{\varepsilon}) - \Tilde{\Gamma}  \right) v^{\flat} \right].
\end{split}
\right.
\end{equation*}

Let $\Pi$ be the orthogonal projection on $\dot{u}_{\varepsilon}$ defined in \eqref{eq.ortinha} and $\Pi^{\perp}$ be the orthogonal projection on the orthogonal of $\dot{u}_{\varepsilon},$ that is
$$
\Pi^{\perp}(f) \coloneqq f - \Pi(f) \dot{u}_{\varepsilon}.
$$

If $v^{\sharp}$ satisfies
$$
\int_{-\infty}^{\infty} v^{\sharp}(t,y) \dot{u}_{\varepsilon}(t)\;dt = 0\;\;\forall y \in \Sigma,
$$
then \eqref{eq.equiv} is equivalent to
\begin{equation*}
\left\lbrace
\begin{split}
L_{\varepsilon} v^{\sharp} &= \Pi^{\perp} \left[ M_{\varepsilon}(v^{\flat},v^{\sharp},\zeta)\right],\;\;\text{in}\;\;M,\\
- \varepsilon J_{\Sigma} \zeta\; \dot{u}_{\varepsilon} &= \Pi \left[ M_{\varepsilon}(v^{\flat},v^{\sharp},\zeta)\right],\;\;\text{in}\;\;M,\\
P_{\varepsilon} v^{\sharp} &= \Pi^{\perp} \left[V_{\varepsilon}(v^{\flat},v^{\sharp},\zeta)\right],\;\;\text{on}\;\;\partial M,\\
- \varepsilon B_{\Sigma} \zeta\; \dot{u}_{\varepsilon} &= \Pi\left[ V_{\varepsilon}(v^{\flat},v^{\sharp},\zeta)\right],\;\;\text{on}\;\;\partial M,\\
\int_{M} \Tilde{u}_{\varepsilon}\;dv_{g} &= c_{0}|M| - \int_{M} \left(\chi_{4} v^{\sharp} + v^{\flat}\right)\;dv_{g}.
\end{split}
\right.
\end{equation*}

\section{Proof of Theorem \ref{thm.main}} \label{sec.proof}
We look for a solution $u$ of \eqref{eq.AC} of the form
$$
u = \left( \Tilde{u}_{\varepsilon} + \chi_{4} v^{\sharp} + v^{\flat} \right) \circ D_{\zeta},
$$
where $v^{\sharp} \colon \mathbb{R} \times \Sigma \to \mathbb{R}$ is a function satisfying
$$
\int_{-\infty}^{\infty} v^{\sharp}(t,y) \dot{u}_{\varepsilon}(t)\;dt = 0\;\;\forall y \in \Sigma,
$$
and, together with the functions $v^{\flat} \colon M \to \mathbb{R}$ and $\zeta \colon \Sigma \to \mathbb{R},$ the system
\begin{equation} \label{eq.equiv1}
\left\lbrace
\begin{split}
\mathcal{L}_{\varepsilon} v^{\flat} &= N_{\varepsilon}(v^{\flat},v^{\sharp},\zeta),\;\;\text{in}\;\;M,\\
L_{\varepsilon} v^{\sharp} &= \Pi^{\perp} \left[ M_{\varepsilon}(v^{\flat},v^{\sharp},\zeta)\right],\;\;\text{in}\;\;M,\\
- \varepsilon J_{\Sigma} \zeta\; \dot{u}_{\varepsilon} &= \Pi \left[ M_{\varepsilon}(v^{\flat},v^{\sharp},\zeta)\right],\;\;\text{in}\;\;M,\\
\mathcal{P}_{\varepsilon} v^{\flat} &= \Tilde{N}_{\varepsilon}(v^{\flat},v^{\sharp},\zeta),\;\;\text{on}\;\;\partial M,\\
P_{\varepsilon} v^{\sharp} &= \Pi^{\perp} \left[V_{\varepsilon}(v^{\flat},v^{\sharp},\zeta)\right],\;\;\text{on}\;\;\partial M,\\
- \varepsilon B_{\Sigma} \zeta\; \dot{u}_{\varepsilon} &= \Pi\left[ V_{\varepsilon}(v^{\flat},v^{\sharp},\zeta)\right],\;\;\text{on}\;\;\partial M,\\
\int_{M} \Tilde{u}_{\varepsilon}\;dv_{g} - c_{0}|M| &=  - \int_{M} \left(\chi_{4} v^{\sharp} + v^{\flat}\right)\;dv_{g}.
\end{split}
\right.
\end{equation}

To deal with this problem as a fixed point one, we need the following estimates. Computations can be adapted from section $7$ of \cite{chodosh-mantoulidis}.

\begin{lem} \label{lem.8}
The following estimate holds
$$
\begin{array}{ll}
\Vert N_{\varepsilon}(0,0,0) \Vert_{\mathcal{C}^{0,\alpha}_{\varepsilon}(M)} & + \Vert \Pi^{\perp}(M_{\varepsilon}(0,0,0))\Vert_{\mathcal{C}^{0,\alpha}_{\varepsilon}(\mathbb{R}\times \Sigma)}\\
& + \Vert \Tilde{N}_{\varepsilon}(0,0,0) \Vert_{\mathcal{C}^{0,\alpha}_{\varepsilon}(\partial M)} + \Vert \Pi^{\perp}(V_{\varepsilon}(0,0,0))\Vert_{\mathcal{C}^{0,\alpha}_{\varepsilon}(\mathbb{R}\times \partial \Sigma)}  \leq C \varepsilon.
\end{array}
$$
Moreover
$$
\Vert \Pi(M_{\varepsilon}(0,0,0))\Vert_{\mathcal{C}^{0,\alpha}(\Sigma)} + \Vert \Pi(V_{\varepsilon}(0,0,0))\Vert_{\mathcal{C}^{0,\alpha}(\partial \Sigma)} \leq C \varepsilon.
$$
\end{lem}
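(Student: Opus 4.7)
The plan is to substitute $(v^{\flat}, v^{\sharp}, \zeta) = (0,0,0)$ directly into the definitions of $N_{\varepsilon}$, $M_{\varepsilon}$, $\Tilde{N}_{\varepsilon}$, and $V_{\varepsilon}$. Using $D_{0} = I$, $Q_{\varepsilon}(0) = \Tilde{Q}_{\varepsilon}(0) = 0$, and $J_{\Sigma} 0 = B_{\Sigma} 0 = 0$, every contribution coming from $v^{\sharp}$, $v^{\flat}$, or the diffeomorphism correction drops out, and the four operators reduce to
\[
N_{\varepsilon}(0,0,0) = (\chi_{4} - 1)[\varepsilon\lambda - E_{\varepsilon}(0)], \qquad M_{\varepsilon}(0,0,0) = \chi_{3}[E_{\varepsilon}(0) - \varepsilon\lambda],
\]
together with the analogous identities for $\Tilde{N}_{\varepsilon}$ and $V_{\varepsilon}$ involving $\Tilde{E}_{\varepsilon}(0)$. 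Here $E_{\varepsilon}(0) = -\varepsilon^{2} \Delta_{g} \Tilde{u}_{\varepsilon} + W'(\Tilde{u}_{\varepsilon})$ and $\Tilde{E}_{\varepsilon}(0) = \varepsilon \langle \nabla \Tilde{u}_{\varepsilon}, \nu_{\partial M}\rangle + \sigma'(\Tilde{u}_{\varepsilon})$ are the interior and boundary residuals of the approximate solution, and it suffices to estimate them in the two relevant cutoff regions.

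In the support of $\chi_{3}$, where $\Tilde{u}_{\varepsilon} = \overline{u}_{\varepsilon} = u_{1}(t/\varepsilon)$, the expansions of the Laplacian and of the outer normal derivative in twisted Fermi coordinates derived in Section \ref{sec.approx}, together with the ODE $u_{1}'' = W'(u_{1})$, yield $E_{\varepsilon}(0) = \varepsilon H_{t}\,\dot{u}_{\varepsilon}$ and $\Tilde{E}_{\varepsilon}(0) = (\langle \nu_{\Sigma_{t}}, \nu_{\partial M}\rangle - \cos\theta)\dot{u}_{\varepsilon} + \mathcal{O}(\varepsilon^{2})$; Lemma \ref{lem.4} then provides the $\mathcal{O}(\varepsilon)$ bounds on $M_{\varepsilon}(0,0,0)$ and $V_{\varepsilon}(0,0,0)$ in the scaled Hölder norm. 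On the support of $\chi_{4} - 1$ one has $|t| \gtrsim \varepsilon^{\delta_{*}}$, hence $|t|/\varepsilon \gtrsim \varepsilon^{\delta_{*}-1} \to \infty$ since $\delta_{*} < 1$; the exponential decay $|u_{1}(t/\varepsilon) \mp 1| \le C e^{-\gamma_{\pm} |t|/\varepsilon}$ then forces $W'(\Tilde{u}_{\varepsilon})$, $\sigma'(\Tilde{u}_{\varepsilon})$, and the pieces of $\varepsilon^{2} \Delta_{g} \Tilde{u}_{\varepsilon}$ (including those involving $\nabla \chi_{1}$) to be smaller than any power of $\varepsilon$. The only surviving contribution is $\varepsilon\lambda = \mathcal{O}(\varepsilon)$ in $N_{\varepsilon}(0,0,0)$, while $\Tilde{N}_{\varepsilon}(0,0,0)$ is superpolynomially small.

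For the projected estimates, applying $\Pi$ to $\chi_{3}[E_{\varepsilon}(0) - \varepsilon\lambda]$ and using $\int \dot{u}_{\varepsilon}^{2}\,dt = \varepsilon c$, $\int t\,\dot{u}_{\varepsilon}^{2}\,dt = 0$ by parity, and $\int t^{2} \dot{u}_{\varepsilon}^{2}\,dt = \mathcal{O}(\varepsilon^{3})$ gives $\Pi(M_{\varepsilon}(0,0,0)) = \varepsilon H_{0} - \tfrac{2\varepsilon\lambda}{c} + \mathcal{O}(\varepsilon^{2})$, which is $\mathcal{O}(\varepsilon)$; the corresponding computation for $\Pi(V_{\varepsilon}(0,0,0))$ produces an $\mathcal{O}(\varepsilon^{2})$ bound exactly as in Lemma \ref{lem.5}. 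The bounds on the orthogonal projections then follow from $\| \Pi^{\perp} f \|_{\mathcal{C}^{0,\alpha}_{\varepsilon}} \le \| f \|_{\mathcal{C}^{0,\alpha}_{\varepsilon}} + C \| \Pi f \|_{\mathcal{C}^{0,\alpha}_{\varepsilon}}$ together with the bounds already established on $f = M_{\varepsilon}(0,0,0)$ and $f = V_{\varepsilon}(0,0,0)$.

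The main obstacle is that every bound must hold in the scaled Hölder norm $\mathcal{C}^{0,\alpha}_{\varepsilon}$, which weights the $k$-th derivative by $\varepsilon^{k}$. Cutoff derivatives produce factors of $\varepsilon^{-\delta_{*}}$, tangential differentiation through the $y$-dependent metric $g_{z}$ introduces further powers of $\varepsilon^{-1}$ when hitting $u_{1}(t/\varepsilon)$, and one must check that the exponential factor $e^{-c\varepsilon^{\delta_{*}-1}}$ beats every such polynomial loss, which is precisely why the intermediate scale $\delta_{*} \in (0,1)$ was introduced. These are the technical computations in Section $7$ of \cite{chodosh-mantoulidis}, which can be adapted essentially line-by-line to the present capillary setting once $\Tilde{E}_{\varepsilon}$ is analysed in parallel with $E_{\varepsilon}$.
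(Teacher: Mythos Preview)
Your proposal is correct and follows essentially the same route as the paper: substitute $(0,0,0)$, reduce $N_{\varepsilon}, M_{\varepsilon}, \Tilde{N}_{\varepsilon}, V_{\varepsilon}$ to cutoff multiples of $E_{\varepsilon}(0)$ and $\Tilde{E}_{\varepsilon}(0)$, identify these in the support of $\chi_{3}$ with $\mathfrak{N}(0,0)$ and $\mathfrak{M}(0,0)$, and invoke Lemmas~\ref{lem.4} and~\ref{lem.5}. Your treatment is in fact more careful than the paper's, which suppresses the cutoffs $\chi_{3}$ and $\chi_{4}-1$ in its displayed identities and does not spell out the exponential-decay argument on the support of $\chi_{4}-1$ that you include.
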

\begin{proof}
Since
$$
\begin{array}{l}
N_{\varepsilon}(0,0,0) = \varepsilon \lambda - E_{\varepsilon}(0),\;\; M_{\varepsilon}(0,0,0) = -\varepsilon \lambda -E_{\varepsilon}(0),\\
\Tilde{N}_{\varepsilon}(0,0,0) = -\Tilde{E}_{\varepsilon}(0),\;\;  V_{\varepsilon}(0,0,0)) = \Tilde{E}_{\varepsilon}(0),
\end{array}
$$
and $\Tilde{u}_{\varepsilon} = \overline{u}_{\varepsilon}$ in the support of $\chi_{3}$ so that 
$$
E_{\varepsilon}(0) = -\varepsilon^{2} \Delta_{g} \Tilde{u}_{\varepsilon} + W'(\Tilde{u}_{\varepsilon}) = \varepsilon H_{t} \dot{u}_{\varepsilon} = \mathfrak{N}(0,0),
$$
and
$$
\Tilde{E}_{\varepsilon}(0) = -\varepsilon \left\langle \nabla \Tilde{u}_{\varepsilon}, \nu_{\partial M} \right\rangle - \sigma'(\Tilde{u}_{\varepsilon}) = \left(\left\langle \nu_{\Sigma_{t}}, \nu_{\partial M} \right\rangle - \cos \theta \right)\dot{u}_{\varepsilon} = \mathfrak{M}(0,0),
$$
the estimates follow from Lemma \ref{lem.4} and Lemma \ref{lem.5}.
\end{proof}

By computing directly and using the definition of the the $\mathcal{C}^{k,\alpha}_{\varepsilon}$ norms, we have the following.

\begin{lem} \label{lem.9}
Suppose that $v_{j}^{\flat},$ $v_{j}^{\sharp}$ and $\zeta_{j},$ $j = 1,2,$ are functions such that 
$$
\Vert v_{j}^{\flat} \Vert_{\Tilde{\mathcal{C}}^{2,\alpha}_{\varepsilon}(M) \cap \Tilde{\mathcal{C}}^{1,\alpha}_{\varepsilon}(\overline{M})} + \Vert v_{j}^{\sharp} \Vert_{\mathcal{C}^{2,\alpha}_{\varepsilon}(\mathbb{R}\times \Sigma) \cap \mathcal{C}^{1,\alpha}_{\varepsilon}(\mathbb{R}\times \overline{\Sigma})} + \varepsilon^{2\alpha} \Vert \zeta_{j} \Vert_{\mathcal{C}^{2,\alpha}(\Sigma) \cap \mathcal{C}^{1,\alpha}(\overline{\Sigma})} \leq \overline{C} \varepsilon,
$$
for some fixed $\overline{C} > 0,$ then there exists a constant $\delta > 0$ independent of $\alpha \in (0,1),$ such that
$$
\begin{array}{l}
\Vert N_{\varepsilon}(v_{2}^{\flat},v_{2}^{\sharp},\zeta_{2}) - N_{\varepsilon}(v_{1}^{\flat},v_{1}^{\sharp},\zeta_{1})\Vert_{\mathcal{C}^{0,\alpha}_{\varepsilon}(M)} + \Vert \Tilde{N}_{\varepsilon}(v_{2}^{\flat},v_{2}^{\sharp},\zeta_{2}) - \Tilde{N}_{\varepsilon}(v_{1}^{\flat},v_{1}^{\sharp},\zeta_{1}) \Vert_{\mathcal{C}^{0,\alpha}_{\varepsilon}(\partial M)} \\
\quad \leq C \varepsilon^{\delta} \left(\Vert v_{2}^{\flat} - v_{1}^{\flat} \Vert_{\mathcal{C}^{2,\alpha}_{\varepsilon}(M) \cap \mathcal{C}^{1,\alpha}_{\varepsilon}(\overline{M})} + \Vert v_{2}^{\sharp} - v_{1}^{\sharp} \Vert_{\mathcal{C}^{2,\alpha}_{\varepsilon}(\mathbb{R}\times \Sigma) \cap \mathcal{C}^{1,\alpha}_{\varepsilon}(\mathbb{R}\times \overline{\Sigma})}  + \Vert \zeta_{2} - \zeta_{1} \Vert_{\mathcal{C}^{2,\alpha}(\Sigma) \cap \mathcal{C}^{1,\alpha}(\overline{\Sigma})} \right),
\end{array}
$$
$$
\begin{array}{l}
\Vert \Pi^{\perp}(M_{\varepsilon}(v_{2}^{\flat},v_{2}^{\sharp},\zeta_{2}) - M_{\varepsilon}(v_{1}^{\flat},v_{1}^{\sharp},\zeta_{1})) \Vert_{\mathcal{C}^{0,\alpha}(\mathbb{R} \times \Sigma)} + \Vert \Pi^{\perp}(V_{\varepsilon}(v_{2}^{\flat},v_{2}^{\sharp},\zeta_{2}) - V_{\varepsilon}(v_{1}^{\flat},v_{1}^{\sharp},\zeta_{1})) \Vert_{\mathcal{C}^{0,\alpha}(\mathbb{R} \times \partial \Sigma)} \\
\quad \leq C \varepsilon^{\delta} \left(\Vert v_{2}^{\flat} - v_{1}^{\flat} \Vert_{\Tilde{\mathcal{C}}^{2,\alpha}_{\varepsilon}(M) \cap \Tilde{\mathcal{C}}^{1,\alpha}_{\varepsilon}(\overline{M})} + \Vert v_{2}^{\sharp} - v_{1}^{\sharp} \Vert_{\mathcal{C}^{2,\alpha}_{\varepsilon}(\mathbb{R}\times \Sigma) \cap \mathcal{C}^{1,\alpha}_{\varepsilon}(\mathbb{R}\times \overline{\Sigma})}  + \Vert \zeta_{2} - \zeta_{1} \Vert_{\mathcal{C}^{2,\alpha}(\Sigma)\cap \mathcal{C}^{1,\alpha}(\overline{\Sigma})} \right),
\end{array}
$$
and
$$
\begin{array}{l}
\Vert \Pi(M_{\varepsilon}(v_{2}^{\flat},v_{2}^{\sharp},\zeta_{2}) - M_{\varepsilon}(v_{1}^{\flat},v_{1}^{\sharp},\zeta_{1})) \Vert_{\mathcal{C}^{0,\alpha}(\Sigma)} + \Vert \Pi(V_{\varepsilon}(v_{2}^{\flat},v_{2}^{\sharp},\zeta_{2}) - V_{\varepsilon}(v_{1}^{\flat},v_{1}^{\sharp},\zeta_{1})) \Vert_{\mathcal{C}^{0,\alpha}(\partial \Sigma)}\\
\leq C \varepsilon^{1 - \alpha} \Vert v_{2}^{\flat} - v_{1}^{\flat} \Vert_{\mathcal{C}^{2,\alpha}_{\varepsilon}(M) \cap \mathcal{C}^{1,\alpha}_{\varepsilon}(\overline{M})} + \varepsilon^{1 + \delta} \left( \Vert v_{2}^{\sharp} - v_{1}^{\sharp} \Vert_{\Tilde{\mathcal{C}}^{2,\alpha}_{\varepsilon}(\mathbb{R}\times \Sigma) \cap \Tilde{\mathcal{C}}^{1,\alpha}_{\varepsilon}(\mathbb{R}\times \overline{\Sigma})}  + \Vert \zeta_{2} - \zeta_{1} \Vert_{\mathcal{C}^{2,\alpha}(\Sigma) \cap \mathcal{C}^{1,\alpha}(\overline{\Sigma})} \right).
\end{array}
$$
\end{lem}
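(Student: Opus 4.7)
The plan is to write each of the four nonlinear operators $N_{\varepsilon}, \Tilde{N}_{\varepsilon}, M_{\varepsilon}, V_{\varepsilon}$ as a finite sum of elementary building blocks and to establish a Lipschitz estimate for each block on the ball of radius $\bar{C}\varepsilon$ in the stated product space. The blocks fall into five classes: the superquadratic remainders $Q_{\varepsilon}(v)$ and $\Tilde{Q}_{\varepsilon}(v)$; the diffeomorphism commutators $\Delta_{g}(w \circ D_{\zeta}) \circ D_{\zeta}^{-1} - \Delta_{g} w$ and the corresponding boundary expression; the approximate-solution errors $E_{\varepsilon}(\zeta), \Tilde{E}_{\varepsilon}(\zeta)$ together with the subtracted terms $\varepsilon J_{\Sigma}\zeta\, \dot{u}_{\varepsilon}$ and $\varepsilon B_{\Sigma}\zeta\, \dot{u}_{\varepsilon}$; the multiplication factors $W''(\Tilde{u}_{\varepsilon}) - \Gamma$ and $\sigma''(\Tilde{u}_{\varepsilon}) - \Tilde{\Gamma}$ applied to $v^{\flat}$; and the cutoff commutators $\Delta_{g}(\chi_{4} v^{\sharp} \circ D_{\zeta}) - \chi_{4}\Delta_{g}(v^{\sharp} \circ D_{\zeta})$ and its boundary counterpart. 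In every case the Lipschitz control comes from the identity $F(u_{2}) - F(u_{1}) = \int_{0}^{1} DF(u_{1} + s(u_{2} - u_{1}))(u_{2} - u_{1})\, ds$, and the required $\varepsilon^{\delta}$ gain is extracted either from an explicit power of $\varepsilon$ in the defining expression, or from the smallness $\bar{C}\varepsilon$ of the arguments.

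For $Q_{\varepsilon}$, the Taylor representation $Q_{\varepsilon}(v) = \int_{0}^{1}(1-s)W'''(\Tilde{u}_{\varepsilon} + sv)v^{2}\, ds$ and the smoothness of $W$ yield the pointwise bound $|DQ_{\varepsilon}(v)[w]| \lesssim |v||w|$, so the Lipschitz constant on the ball is controlled by $\|v\|_{L^{\infty}} \lesssim \varepsilon$; the argument for $\Tilde{Q}_{\varepsilon}$ is identical using $\sigma \in C^{\infty}$. The multiplication terms $(W''(\Tilde{u}_{\varepsilon}) - \Gamma)v^{\flat}$ and $(\sigma''(\Tilde{u}_{\varepsilon}) - \Tilde{\Gamma})v^{\flat}$ are supported outside $\Omega_{3}$, where the exponential convergence of $u_{\varepsilon}$ to $\pm 1$ makes the factor $W''(\Tilde{u}_{\varepsilon}) - \Gamma$ exponentially small in $\varepsilon$; in $\Omega_{5}$ one additionally exploits the $\varepsilon^{2}$ gain built into the $\Tilde{\mathcal{C}}^{2,\alpha}_{\varepsilon}$-norm of $v^{\flat}$ in the second conclusion. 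The diffeomorphism commutator expands as a linear-plus-higher-order combination of $\zeta$ and its derivatives up to order two with smooth coefficients depending on $\zeta$ and $\nabla\zeta$; since $\|\zeta\|_{\mathcal{C}^{2,\alpha}} \leq \bar{C}\varepsilon^{1 - 2\alpha}$, the difference of two such commutators is Lipschitz in $\zeta$ with constant $C\varepsilon^{1 - 2\alpha}\|w\|_{\mathcal{C}^{2,\alpha}_{\varepsilon}}$. For $E_{\varepsilon}(\zeta)$ and $\Tilde{E}_{\varepsilon}(\zeta)$, Taylor expansion around $\zeta = 0$ produces a leading linear term which is precisely $-\varepsilon J_{\Sigma}\zeta\, \dot{u}_{\varepsilon}$ (respectively $-\varepsilon B_{\Sigma}\zeta\, \dot{u}_{\varepsilon}$) plus an $O(\varepsilon\zeta^{2})$ remainder; because the linear terms are subtracted inside $M_{\varepsilon}$ and $V_{\varepsilon}$, only the quadratic remainder enters the Lipschitz estimate, contributing another positive power of $\varepsilon$.

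The sharper projected estimate requires a different mechanism: the integrand in $\Pi(f)(y) = (\varepsilon c)^{-1}\int f(t, y)\dot{u}_{\varepsilon}(t)\, dt$ is supported essentially within an $\varepsilon$-neighborhood of $\{t = 0\}$ because $\dot{u}_{\varepsilon}(t) = u_{1}'(t/\varepsilon)$ is exponentially concentrated there, and the change of variable $\tau = t/\varepsilon$ supplies an intrinsic factor of $\varepsilon$. For the $v^{\flat}$-contribution this already produces the weight $\varepsilon^{1 - \alpha}$, the $\varepsilon^{-\alpha}$ loss being the cost of promoting the $L^{\infty}$-type integral bound to a $\mathcal{C}^{0, \alpha}$-norm. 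For the $v^{\sharp}$ and $\zeta$ contributions the leading linear-in-$\zeta$ part of $E_{\varepsilon}(\zeta)$ and $\Tilde{E}_{\varepsilon}(\zeta)$ pairs against $\dot{u}_{\varepsilon}$ in such a way that the odd-moment identity $\int_{\mathbb{R}} t\, \dot{u}_{\varepsilon}^{2}\, dt = 0$ invoked in Lemma \ref{lem.5} kills the top-order contribution; combined with the basic $\varepsilon$-factor from the concentration, this yields the $\varepsilon^{1 + \delta}$ weight.

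The main obstacle is the bookkeeping: one must simultaneously track the $\varepsilon$-weights encoded in the scaled H\"older norms $\mathcal{C}^{k,\alpha}_{\varepsilon}$ and $\Tilde{\mathcal{C}}^{k,\alpha}_{\varepsilon}$; the $\varepsilon^{\delta_{*}}$-widths of the nested supports $\Omega_{j}$; the different domains of $v^{\sharp}$ (local, on $\mathbb{R}\times\Sigma$) and $v^{\flat}$ (global, on $M$); the cancellation of the linear-in-$\zeta$ parts of $E_{\varepsilon}(\zeta)$ and $\Tilde{E}_{\varepsilon}(\zeta)$ against the explicit Jacobi and boundary-Jacobi subtractions; and the interaction between the pulled-back and ambient Laplacians under $D_{\zeta}$. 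Once this accounting is performed uniformly over all blocks, a universal $\delta > 0$ -- depending on $\delta_{*}$ but independent of $\alpha \in (0, 1)$ -- is extracted as the minimum of the explicit positive powers of $\varepsilon$ appearing in the individual bounds, which concludes the proof.
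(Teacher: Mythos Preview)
Your proposal is correct and follows exactly the approach the paper indicates: the paper gives no detailed proof of this lemma at all, stating only ``By computing directly and using the definition of the $\mathcal{C}^{k,\alpha}_{\varepsilon}$ norms'' (and earlier, that the computations can be adapted from Section~7 of \cite{chodosh-mantoulidis}). Your block-by-block decomposition, the mean-value-integral Lipschitz argument, the use of the exponential localisation of $\dot{u}_{\varepsilon}$ and the odd-moment cancellation for the $\Pi$-estimates are precisely the ingredients such a direct computation requires, so your sketch is in fact considerably more informative than what the paper supplies.
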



Choose $\alpha \in (0,1)$ close to $0,$ and assume that we are given $v^{\flat} \in \Tilde{\mathcal{C}}^{2,\alpha}_{\varepsilon}(M) \cap \Tilde{\mathcal{C}}^{1,\alpha}_{\varepsilon}(\overline{M}),$ $v^{\sharp} \in \mathcal{C}^{2,\alpha}_{\varepsilon}(\mathbb{R}\times \Sigma) \cap \mathcal{C}^{1,\alpha}_{\varepsilon}(\mathbb{R}\times \overline{\Sigma})$ and $\zeta \in \mathcal{C}^{2,\alpha}(\Sigma) \cap \mathcal{C}^{1,\alpha}(\overline{\Sigma})$ such that
\begin{equation} \label{eq.condition}
\Vert v^{\flat} \Vert_{\Tilde{\mathcal{C}}^{2,\alpha}_{\varepsilon}(M) \cap \Tilde{\mathcal{C}}^{1,\alpha}_{\varepsilon}(\overline{M})} + \Vert v^{\sharp} \Vert_{\mathcal{C}^{2,\alpha}_{\varepsilon}(\mathbb{R}\times \Sigma) \cap \mathcal{C}^{1,\alpha}_{\varepsilon}(\mathbb{R}\times \overline{\Sigma})} + \varepsilon^{2\alpha} \Vert \zeta \Vert_{\mathcal{C}^{2,\alpha}(\Sigma) \cap \mathcal{C}^{1,\alpha}(\overline{\Sigma})} \leq \Tilde{C} \varepsilon,
\end{equation}
for some constant $\Tilde{C}.$ 

Using Lemma \ref{lem.8}, Proposition \ref{prop.2}, Proposition \ref{prop.3} and the nondegeneracy condition, we find $\Tilde{v}^{\flat} \in \Tilde{\mathcal{C}}^{2,\alpha}_{\varepsilon}(M) \cap \Tilde{\mathcal{C}}^{1,\alpha}_{\varepsilon}(\overline{M}),$ $\Tilde{v}^{\sharp} \in \mathcal{C}^{2,\alpha}_{\varepsilon}(\mathbb{R}\times \Sigma) \cap  \mathcal{C}^{1,\alpha}_{\varepsilon}(\mathbb{R}\times \overline{\Sigma})$ and $\Tilde{\zeta} \in \mathcal{C}^{2,\alpha}(\Sigma) \cap \mathcal{C}^{1,\alpha}(\overline{\Sigma})$  such that
\begin{equation} \label{eq.fixed1}
\left\lbrace
\begin{split}
\mathcal{L}_{\varepsilon} \Tilde{v}^{\flat} &= N_{\varepsilon}(v^{\flat},v^{\sharp},\zeta),\;\;\text{in}\;\;M,\\
L_{\varepsilon} \Tilde{v}^{\sharp} &= \Pi^{\perp} \left[ M_{\varepsilon}(v^{\flat},v^{\sharp},\zeta)\right],\;\;\text{in}\;\;M,\\
- \varepsilon J_{\Sigma} \Tilde{\zeta}\; \dot{u}_{\varepsilon} &= \Pi \left[ M_{\varepsilon}(v^{\flat},v^{\sharp},\zeta)\right],\;\;\text{in}\;\;M,\\
\mathcal{P}_{\varepsilon} \Tilde{v}^{\flat} &= \Tilde{N}_{\varepsilon}(v^{\flat},v^{\sharp},\zeta),\;\;\text{on}\;\;\partial M,\\
P_{\varepsilon} \Tilde{v}^{\sharp} &= \Pi^{\perp} \left[V_{\varepsilon}(v^{\flat},v^{\sharp},\zeta)\right],\;\;\text{on}\;\;\partial M,\\
- \varepsilon B_{\Sigma} \Tilde{\zeta}\; \dot{u}_{\varepsilon} &= \Pi\left[ V_{\varepsilon}(v^{\flat},v^{\sharp},\zeta)\right],\;\;\text{on}\;\;\partial M,
\end{split}
\right.
\end{equation}
and
$$
\Vert \Tilde{v}^{\flat} \Vert_{\Tilde{\mathcal{C}}^{2,\alpha}_{\varepsilon}(M) \cap \Tilde{\mathcal{C}}^{1,\alpha}_{\varepsilon}(\overline{M})} + \Vert \Tilde{v}^{\sharp} \Vert_{\mathcal{C}^{2,\alpha}_{\varepsilon}(\mathbb{R}\times \Sigma) \cap \mathcal{C}^{1,\alpha}_{\varepsilon}(\mathbb{R}\times \overline{\Sigma})} + \varepsilon^{2\alpha} \Vert \Tilde{\zeta} \Vert_{\mathcal{C}^{2,\alpha}(\Sigma) \cap \mathcal{C}^{1,\alpha}(\overline{\Sigma})} \leq \Tilde{C} \varepsilon,
$$
for some constant $\Tilde{C}.$ Thus, we have a continuous mapping from $\Tilde{\mathcal{C}}^{2,\alpha}_{\varepsilon}(M) \cap \Tilde{\mathcal{C}}^{1,\alpha}_{\varepsilon}(\overline{M}) \times  \mathcal{C}^{2,\alpha}_{\varepsilon}(\mathbb{R}\times \Sigma) \cap \mathcal{C}^{1,\alpha}_{\varepsilon}(\mathbb{R}\times \overline{\Sigma}) \times \mathcal{C}^{2,\alpha}(\Sigma) \cap \mathcal{C}^{1,\alpha}(\overline{\Sigma})$  satisfying \eqref{eq.condition} into itself. By Lemma \ref{lem.9} together with the contractivity of $\int_{M} \left(\chi_{4} \Tilde{v}^{\sharp} + \Tilde{v}^{\flat}\right)\;dv_{g}$ to include the mass constraint, we can apply the fixed point theorem for contraction mappings to find the existence of a unique solution $(v^{\flat},v^{\sharp},\zeta)$ of \eqref{eq.equiv1} satisfying
$$
\Vert v^{\flat} \Vert_{\Tilde{\mathcal{C}}^{2,\alpha}_{\varepsilon}(M) \cap \Tilde{\mathcal{C}}^{1,\alpha}_{\varepsilon}(\overline{M})} + \Vert v^{\sharp} \Vert_{\mathcal{C}^{2,\alpha}_{\varepsilon}(\mathbb{R}\times \Sigma) \cap \mathcal{C}^{1,\alpha}_{\varepsilon}(\mathbb{R}\times \overline{\Sigma})} + \varepsilon^{2\alpha} \Vert \zeta \Vert_{\mathcal{C}^{2,\alpha}(\Sigma) \cap \mathcal{C}^{1,\alpha}(\overline{\Sigma})} \leq \overline{C} \varepsilon,
$$
for some $\overline{C} > 0$ large enough. We have found then a solution $u = (\overline{u}_{\varepsilon} + \chi_{4} v^{\sharp} + v^{\flat}) \circ D_{\zeta}$ of \eqref{eq.AC} and the proof is concluded.

\end{document}